\numberwithin{equation}{section}
\newtheorem{MainThm}{Theorem}
\newtheorem*{clthm}{Classical Theorem}
\newtheorem{thm}[equation]{Theorem}
\newtheorem{lem}[equation]{Lemma}
\newtheorem{prop}[equation]{Proposition}
\newtheorem{construction}[equation]{Construction}
\theoremstyle{remark}
\newtheorem{rems}[equation]{Remarks}
\newtheorem{rem}[equation]{Remark}
\newtheorem{explanations}[equation]{Explanations}
\newtheorem{example}[equation]{Example}
\theoremstyle{definition}
\newtheorem{defn}[equation]{Definition}
\newcommand{\SO}{\mathrm{SO}}
\newcommand{\bQ}{\mathbb{Q}}
\newcommand{\bR}{\mathbb{R}}
\newcommand{\bZ}{\mathbb{Z}}
\newcommand{\bC}{\mathbb{C}}
\newcommand{\Diff}{\mathrm{Diff}}
\newcommand{\GL}{\mathrm{GL}}
\newcommand{\hAut}{\mathrm{hAut}}
\newcommand{\smb}{\mathrm{smb}}
\newcommand{\Harm}{\mathcal{H}}
\newcommand{\id}{\mathrm{id}}
\newcommand{\inddiff}{\mathrm{inddiff}}
\newcommand{\coker}{\mathrm{coker}}
\newcommand{\colim}{\mathrm{colim}}
\newcommand{\hocolim}{\mathrm{hocolim}}
\newcommand{\cA}{\mathcal{A}}
\newcommand{\im}{\mathrm{im}}
\newcommand{\Eig}{\mathrm{Eig}}
\newcommand{\hur}{\mathrm{hur}}
\newcommand{\thom}{\mathrm{thom}}
\newcommand{\susp}{\mathrm{susp}}
\newcommand{\scpr}[1]{\langle #1 \rangle}
\newcommand{\bN}{\mathbb{N}}
\newcommand{\MT}{\mathrm{MT}}
\newcommand{\psc}{\mathrm{psc}}
\newcommand{\fX}{\mathfrak{X}}
\newcommand{\cp}{\mathbb{C}\mathbb{P}}
\newcommand{\ind}{\mathrm{ind}}
\newcommand{\Aut}{\mathrm{Aut}}
\newcommand{\ahat}{\hat{\mathfrak{a}}}
\newcommand{\Spin}{\mathrm{Spin}}
\newcommand{\twomatrix}[4]{\begin{pmatrix} #1 & #2 \\ #3& #4 \end{pmatrix}}
\newcommand{\Riem}{\mathcal{R}}
\newcommand{\scal}{\mathrm{scal}}
\newcommand{\bF}{\mathbb{F}}
\newcommand{\Bun}{\mathrm{Bun}}
\newcommand{\cW}{\mathcal{W}}
\newcommand{\cK}{\mathcal{K}}
\newcommand{\Homeo}{\mathrm{Homeo}}
\newcommand{\sign}{\mathrm{sign}}
\newcommand{\st}{\mathrm{st}}
\newcommand{\KO}{\mathrm{KO}}
\newcommand{\hq}{\sslash}
\newcommand{\Kerv}[1]{\mathrm{Kerv}_{#1}}
\title[Cancellation properties]{Cancellation properties for exotic $4$-dimensional positive scalar curvature metrics}
\author{Johannes Ebert}
\email{johannes.ebert@uni-muenster.de}
\address{
Mathematisches Institut\\
WWU M{\"u}nster\\
Einsteinstr. 62\\
48149 M{\"u}nster\\
Germany
}
\thanks{The author was supported by the Deutsche Forschungsgemeinschaft (DFG, German Research Foundation) -- Project-ID 427320536 -- SFB 1442, as well as under Germany’s Excellence Strategy EXC 2044 -- 390685587, Mathematics M\"unster: Dynamics–Geometry–Structure.
}
\date{\today}
\begin{document}

\begin{abstract}
Ruberman constructed families $\{g_n\vert n \in \bN\} \subset \Riem^+ (M)$ of metrics of positive scalar curvature on certain $4$-manifolds which are concordant but lie in different path components of $\Riem^+ (M)$. We prove a cancellation result along the following lines. For each closed manifold $N$, there is a map $\nu_N: \Riem^+ (M) \to \Riem^+ (M \times N)$, well-defined up to homotopy, that takes the product with $N$. We prove that when $N$ has positive dimension $\nu_N$ takes all metrics of Ruberman's family to the same path component. This is trivial when $N$ has a psc metric and follows from pseudoisotopy theory when $\dim (N) \geq 3$. 

More generally, we give easily checkable conditions on diffeomorphisms $f:M \to M$ of $1$-connected $4$-manifolds which guarantee that for each closed $N$ and each $g \in \Riem^+ (M)$, the metrics $\nu_N (g)$ and $\nu_N (f^* g)$ lie in the same path component of $\Riem^+ (M \times N)$. The conditions are (A) the cobordism class of the mapping torus $[T(f)] \in\Omega_5^\SO$ vanishes and (B) the induced map $f^* : H^2 (M;\bR) \to H^2(M;\bR)$ belongs to the unit component of $\Aut (I_M)$, the orthogonal group of the intersection form of $M$. Depending on whether $\dim (N)=1$ or $\dim(N) \geq 2$ and on whether or not $M$ is spin, various combinations of (A) and (B) are needed. 

The proof uses rigidity properties for the diffeomorphism action on $\Riem^+ (M)$ for high--dimensional $M$, which in the necessary generality were established by Frenck and Bantje. These rigidity properties reduce the problem to a calculation of the homotopy group $\pi_1 (\MT \SO(4))$ of the Madsen--Tillmann spectrum which we also carry out. 

Recently, Auckly and Ruberman exhibited examples of elements in higher homotopy groups of $\Riem^+(M^4)$ for certain $M$. Using the same method, we also prove that these elements lie in the kernel of the induced map $(\nu_N)_*$ on rational homotopy.
\end{abstract}

\maketitle

\tableofcontents

\section{Introduction}

It is well-known that the many exotic phenomena of smooth $4$-dimensional topology are very fragile under various sorts of stabilization. The purpose of this note is to show a similar behavior for some exotic $4$-dimensional positive scalar curvature phenomena. Before we turn to positive scalar curvature, let us mention a classical result from $4$-dimensional topology to get an impression of the spirit of our main results. 

\begin{clthm}
Let $M_0$ and $M_1$ be closed, $1$-connected smooth $4$-manifolds with isomorphic intersection forms. Then 
\begin{enumerate}
\item $M_0$ and $M_1$ are $h$-cobordant (Wall \cite[Theorem 2]{Wall1co4}),
\item $M_0$ and $M_1$ are not necessarily diffeomorphic, 
\item however $M_0$ and $M_1$ are homeomorphic (Freedman \cite{Freedman}),
\item we have \emph{handle stabilization}: $M_0 \sharp k(S^2 \times S^2)$ and $M_1 \sharp k(S^2 \times S^2)$ are diffeomorphic for sufficiently large $k$ (Wall \cite[Theorem 3]{Wall1co4}),
\item we have \emph{product stabilization}: if $N$ is closed of positive dimension, then $M_0 \times N \cong M_1 \times N$.
\end{enumerate}
\end{clthm}
Examples for (2) exist in abundance and can be detected by means of Seiberg--Witten invariants; see the textbook \cite{Scorpan} for an overview and a guide to the vast literature. Item (5) is folklore and has the following easy proof: the product of an $h$-cobordism $W:M_0 \leadsto M_1$ with $N$ is an $h$-cobordism $W \times N: M_0 \times N \leadsto M_1 \times N$ whose Whitehead torsion vanishes since that of $W$ vanishes, and one can apply the $s$-cobordism theorem to $W \times N$. 

Gauge theory also detects exotic phenomena related to positive scalar curvature. This concerns the existence question, as well as the topology of the space $\Riem^+ (M)$ of all such metrics. In light of the above result, one might suspect that these exotic phenomena exhibit similar cancellation properties. The purpose of this note is to prove that this is indeed the case. 

It is well-known that in the high-dimensional regime ($\dim (M) \geq 5$), the theory of positive scalar curvature is governed by cobordism theory; this is due to the Gromov--Lawson surgery theorem \cite{GL} \cite{Chernysh}; see \cite[Theorems 1.2 and 1.5]{EbertFrenck} for the most general formulation. 
In dimension $4$, we can at least infer that this is true ``stably'', in the following sense. 

\begin{MainThm}\label{thm:existence-questionstably}
Let $B \stackrel{\theta}{\to} BO$ be a fibration and let $M_0$ be a closed $4$-manifold, together with a lift $\ell:M_0 \to B$ of the stabilized tangent bundle of $M_0$, and suppose that $\ell$ is $2$-connected. 
Assume furthermore that the cobordism class $[M_0] \in \Omega_4^\theta$ contains a representative $[M_1]$ which has a psc metric. Then
\begin{enumerate}
\item For sufficiently large $k$, $M_0 \sharp k (S^2 \times S^2)$ has a psc metric (handle stabilization).
\item If $N$ is a closed manifold of positive dimension, then $M_0 \times N$ as a psc metric (product stabilization). 
\end{enumerate}
\end{MainThm}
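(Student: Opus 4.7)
The shared input for both assertions would be a $\theta$-cobordism $W^5 : M_0 \leadsto M_1$, provided by the hypothesis $[M_0]=[M_1]\in\Omega_4^\theta$; the tool is the high-dimensional cobordism theorem for positive scalar curvature, recalled in the introduction from \cite[Theorems 1.2 and 1.5]{EbertFrenck}.

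\emph{For part (2)}, I would form the product cobordism $W\times N : M_0\times N \leadsto M_1\times N$. Its dimension $5+\dim(N)$ is at least $6$, and it carries the tangential structure induced from $\theta$ together with the stable tangent bundle of $N$. The 2-connectivity of $\ell : M_0 \to B$ promotes to the 2-connectivity condition required by the cited cobordism theorem on the product tangential structure. A psc metric on $M_1\times N$ is obtained from the psc metric on $M_1$ by sufficient rescaling: replacing $g$ by $\varepsilon^2 g$ multiplies the scalar curvature by $\varepsilon^{-2}$, so for small $\varepsilon$ the product metric with any fixed auxiliary metric on $N$ has positive scalar curvature. The cobordism theorem applied to $W\times N$ then transports psc to $M_0\times N$.

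\emph{For part (1)}, the plan is to reduce to the high-dimensional regime via a stable-diffeomorphism step. Using the 2-connectivity of $\ell$, I would first perform interior surgeries on $W$ (of index $\leq 2$, hence not affecting the boundary) to arrange that the induced map $W\to B$ is itself a 2-equivalence; the resulting $\theta$-cobordism admits a handle decomposition with handles only of index $2$ and $3$. Kreck's modified-surgery / stable-diffeomorphism theorem, which takes precisely such a $2$-connected $\theta$-bordism as input, then yields
\[ M_0 \sharp k(S^2\times S^2) \;\cong\; M_1 \sharp k(S^2\times S^2) \]
for all sufficiently large $k$. Since $S^2\times S^2$ carries an obvious psc metric, iterating the Gromov--Lawson connected-sum construction produces a psc metric on the right-hand side, and the diffeomorphism transports it to the left-hand side.

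The main obstacle I anticipate lies in making the stable diffeomorphism of (1) produce only \emph{untwisted} $S^2\times S^2$ summands. A 2-handle attached in the $5$-dimensional cobordism along a nullhomotopic circle admits two framings, producing either $S^2\times S^2$ or $S^2\widetilde\times S^2$; in the spin setting the admissible framings have the correct parity automatically, while in the non-spin case one exploits the classical identity $M\sharp(S^2\widetilde\times S^2)\cong M\sharp(S^2\times S^2)$ valid for non-spin $M$, absorbing any twisted summands at the price of enlarging $k$ by a bounded amount. Part (2), by contrast, presents no 4-dimensional difficulty once the tangential structure on the product has been set up correctly.
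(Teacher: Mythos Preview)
Your strategy matches the paper's, but there is a gap in part (1): you never arrange that the structure map $M_1 \to B$ is $2$-connected. Kreck's stable diffeomorphism theorem (Theorem C of \cite{Kreck}) requires \emph{both} manifolds to have $2$-connected structure maps to $B$---this is what ``same normal $1$-type'' means---and nothing in the hypotheses provides this for $M_1$. Your proposed remedy of surgering the interior of $W$ to make $W\to B$ a $2$-equivalence does not fix the problem: that condition says nothing about the connectivity of $M_1 \to B$ or of the inclusion $M_1 \hookrightarrow W$ (imagine $M_1$ disconnected or with large $\pi_1$), so the claimed handle decomposition with only index-$2$ and index-$3$ handles is not justified, and in any case ``$W\to B$ is $2$-connected'' is not the hypothesis Kreck's theorem takes.

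The paper repairs this with one extra step that you should insert at the very start: perform surgery below the middle dimension on $M_1$ itself---i.e.\ along embedded $0$- and $1$-spheres---to produce a $\theta$-cobordant $M_2$ whose structure map $M_2\to B$ is $2$-connected. These surgeries have codimension $\geq 3$ in a $4$-manifold, so by Gromov--Lawson $M_2$ still carries a psc metric. Now Kreck applies to the pair $(M_0,M_2)$ and yields $M_0\sharp k(S^2\times S^2)\cong M_2\sharp \ell(S^2\times S^2)$, which has psc. The same $M_2$ (rather than $M_1$) is what the paper uses in part (2), so that both product structure maps $M_i\times N\to B\times B_N$ are $2$-connected before invoking \cite[Theorem 1.5]{EbertFrenck}. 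Once this step is in place your worry about twisted versus untwisted $S^2\times S^2$ summands is moot: with both structure maps $2$-connected, Kreck's Theorem C delivers the stated conclusion directly.
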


This result is certainly known and a rather easy consequence of the Gromov--Lawson surgery theorem. We give the proof in \S \ref{sec:existence}. 
If $M_0$ had dimension $\geq 5$, the conclusion that $M_0$ has a psc metric is valid without stabilization, by \cite[Theorem 1.5]{EbertFrenck}. The latter conclusion fails in dimension $4$: 

\begin{example}
Each cobordism class in $\Omega^{\SO}_4\cong \bZ \{[\cp^2]\}$ admits a psc representative. Therefore, in order to find a counterexample, one needs a $1$-connected $M^4$ which is not spin and does not admit a psc metric. 

A $4$-dimensional K\"ahler manifold $M$ with $b_2^+(M) \geq 2$ does not have a psc metric, by a result of Taubes \cite{Taubes}, combined with e.g. \cite[Corollary 5.18]{Morgan}. 

Examples of such K\"ahler manifolds which are $1$-connected and non-spin are easy to find: a degree $d$ algebraic hypersurface $X_d \subset \cp^3$ is $1$-connected by the Lefschetz hyperplane theorem, and its characteristic numbers can be calculated by a classical method \cite{Milnor4}. The outcome is that for $d \geq 5$ is odd, $X_d$ is nonspin and has $b_2^+(X_d)\geq 2$.
\end{example}

Let us now turn, finally, to the main aim of this note, that is $\pi_0(\Riem^+ (M))$. There are examples of $4$-manifolds with ``exotic'' $\pi_0 (\Riem^+ (\_))$. For instance, the following is proven in \cite[\S 5]{Ruberman}.

\begin{thm}[Ruberman]\label{thm:ruberman}
There exists a $1$-connected $4$-manifold\footnote{For example $M= 4 (\cp^2) \sharp 21 (\overline{\cp^2})$.} $M$, a psc metric $g_0 \in \Riem^+ (M)$ and a family $\{f_i\vert i \in \bN_0\}$ of orientation-preserving diffeomorphisms $f_i:M \to M$ with $f_0=\id$, with the following properties.
\begin{enumerate}
\item Each $f_i$ is pseudoisotopic to the identity, i.e. there are diffeomorphisms $F_i: M \times [0,1] \to M \times [0,1]$ such that $F_i (x,0)=(x,0)$ and $F_i (x,1)= (f_i (x),1)$ for each $x \in M$. 
\item The metrics $g_i:= f_i^* g_0$ are pairwise nonisotopic, i.e. the elements $[g_i] \in \pi_0 (\Riem^+ (M))$ are pairwise distinct. 
\end{enumerate}
\end{thm}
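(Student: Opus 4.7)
The plan is to produce a single orientation-preserving diffeomorphism $\phi\colon M \to M$ that is pseudoisotopic to $\id_M$ but for which $\phi^*g_0$ and $g_0$ lie in distinct components of $\Riem^+(M)$, and then set $f_n := \phi^n$, $g_n := (\phi^n)^* g_0$. Property (1) is then automatic because pseudoisotopies stack: concatenating $n$ pseudoisotopies from $\id$ to $\phi$ produces a pseudoisotopy from $\id$ to $\phi^n$. For (2) we need an invariant $\Phi$ attached to a pair (diffeomorphism, psc metric component) such that (i) $\Phi$ is additive under composition of diffeomorphisms, (ii) $\Phi(h,g)$ vanishes whenever $g$ and $h^*g$ lie in the same component of $\Riem^+(M)$, and (iii) $\Phi(\phi,g_0) \ne 0$.

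The construction of $\phi$ starts from two smooth structures $M_0^{sm},M_1^{sm}$ on the underlying topological $4$-manifold that are not diffeomorphic but are distinguished by Seiberg--Witten invariants; for $M = 4(\cp^2)\sharp 21(\overline{\cp^2})$ such pairs exist in abundance by Donaldson--Seiberg--Witten theory. Freedman's theorem yields a homeomorphism, and Wall's classical theorem a smooth $h$-cobordism $W$ between them. A diffeomorphism of $M_0^{sm}$ supported near a suitable embedded surface (chosen to realize a nontrivial change on $\Spin^c$ structures) is transported across $W$ to give a diffeomorphism $\phi\colon M \to M$, together with an $h$-cobordism from $(M \times I,\id)$ to $(M \times I,\phi)$; this $h$-cobordism \emph{is} a pseudoisotopy. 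By construction $\phi$ is not smoothly isotopic to $\id_M$, reflecting the failure of the smooth $h$-cobordism theorem in dimension $4$.

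The invariant $\Phi$ is Ruberman's $1$-parameter family Seiberg--Witten invariant. Fix a $\phi$-invariant $\Spin^c$ structure $\mathfrak{s}$ on $M$ of expected dimension $d(\mathfrak{s}) = -1$, and for a path $\gamma\colon [0,1] \to \Riem(M)$ with $\gamma(0) = g$, $\gamma(1) = h^*g$, define $\Phi_\gamma(h,g;\mathfrak{s}) \in \bZ$ as the signed count of solutions of the parametrized Seiberg--Witten equations along $\gamma$ after generic perturbation. The hypothesis $b_2^+(M) \geq 2$ (which the example satisfies) rules out wall crossings, so the count depends only on the homotopy class of $\gamma$ rel endpoints and descends to an invariant of the pair $(h,[g])$; concatenation of paths gives (i). Property (ii) is Witten's vanishing theorem: no irreducible Seiberg--Witten solutions exist at a psc metric, so any path inside $\Riem^+(M)$ produces an empty parametrized moduli space. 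For (iii), compare the psc path (count $0$) with a generic path through $\Riem(M)$ routed across $W$; the wall-crossing formula identifies the discrepancy with the difference of the ordinary Seiberg--Witten invariants of $M_0^{sm}$ and $M_1^{sm}$ at $\mathfrak{s}$, which is nonzero by the choice of smooth structures.

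Additivity under composition then gives, for $n \ne m$, $\Phi(\phi^{n-m}, g_m; \mathfrak{s}) = (n-m) \cdot \Phi(\phi, g_0; \mathfrak{s}) \ne 0$, and by (ii) this forbids $g_n$ and $g_m$ from lying in the same component of $\Riem^+(M)$, proving (2). The main obstacle in this program is the nonvanishing input (iii): one has to execute the wall-crossing analysis carefully enough to recognize the $1$-parameter SW count along a generic path as the difference of ordinary SW invariants of the two smooth structures, which is where the specific construction of $\phi$ from inequivalent smooth structures actually pays off; additivity, vanishing, and the assembly of $\phi$ via the $h$-cobordism are by comparison routine.
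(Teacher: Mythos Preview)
The paper does not actually prove this statement: it is quoted verbatim from Ruberman's work, with the attribution ``the following is proven in \cite[\S 5]{Ruberman}'', and no argument is given here. So there is no proof in the paper to compare your proposal against.

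Evaluated on its own, your overall architecture is the right one and is indeed Ruberman's: produce a single $\phi$ pseudoisotopic to $\id$, iterate it, and separate the metrics $g_n=(\phi^n)^*g_0$ using a $1$-parameter Seiberg--Witten invariant that is additive under composition and vanishes along paths in $\Riem^+(M)$. Where your sketch goes off the rails is the construction of $\phi$ and the source of the pseudoisotopy. Ruberman does not ``transport a diffeomorphism across an $h$-cobordism''; he writes $\phi$ explicitly as a composition of reflections in embedded $(\pm 1)$-spheres, arranged so that the induced map on $H_2(M)$ is the identity. Pseudoisotopy to $\id$ then follows from Kreck's theorem that, on a closed simply connected $4$-manifold, diffeomorphisms are pseudoisotopic iff they agree on $H_2$. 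Your sentence ``an $h$-cobordism from $(M\times I,\id)$ to $(M\times I,\phi)$; this $h$-cobordism is a pseudoisotopy'' conflates a $6$-dimensional cobordism with a $5$-dimensional self-diffeomorphism of $M\times I$ and does not yield what you need.

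The nonvanishing step (your (iii)) is also not organized via an $h$-cobordism between exotic copies of $M$. Note that $M$ itself carries a psc metric and hence has vanishing ordinary SW invariants, so ``two smooth structures on $M$ distinguished by SW'' is not quite the right picture. Ruberman's wall-crossing identifies the $1$-parameter count with a difference of ordinary SW invariants of \emph{blow-downs} of $M$ along the spheres used to build $\phi$; the inequivalent smooth structures live on those blow-downs, not on $M$. If you rewrite the construction of $\phi$ as a product of sphere reflections and redirect the wall-crossing to the blow-downs, the rest of your outline goes through.
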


Not surprisingly, that $[g_i] \neq [g_j]$ for $i \neq j$ is detected by Seiberg--Witten invariants. 
Note that the psc metrics $g_i$ are all concordant, via $F_i^* (g_0 \oplus dx^2)$. To best of the knowledge of the author, these are the only known examples of concordant but not isotopic psc metrics (index theoretic methods cannot address this question at all, see \cite{Hertl} for a high-brow variant of this statement).

These examples satisfy ``handle stabilization'': after taking the connected sum with sufficiently many copies of $S^2 \times S^2$'s, all the metrics $g_i$ become isotopic. 
More precisely, we may suppose that there is an embedded disc $D^4 \subset M$ fixed (pointwise) by each $f_i$, and we may also suppose that $g_0$ is a torpedo metric inside $D^4$. We can therefore take the connected sum of $f_i$ and the identity on $k(S^2 \times S^2)$, and likewise we can take the connected sum of $g_0$ with some suitable standard metric on $k(S^2 \times S^2)$. 
It follows from \cite[Theorem 1.4]{Quinn} (as corrected by \cite{Gabai}) that after taking connected sum with $k(S^2 \times S^2)$ for suitable $k$, $f_i$ becomes isotopic to the identity, so that $g_i$ and $g_0$ become isotopic (handle stabilization). 

Note that for $M^{2n}$ with $n \geq 3$, the connected sum map 
\[
\Riem^+ (M) \to \Riem^+ (M \sharp (S^n \times S^n))
\]
is a homotopy equivalence, by \cite[Theorem 1.2]{EbertFrenck}; the above reasoning proves that this is not true when $n=2$. 

We wish to prove that in the situation of Theorem \ref{thm:ruberman}, we also have product stabilization. Beforehand, we have have to clarify what is meant by that. 

\begin{construction}
Let $M$ be a closed manifold. For $a \in \bR$, we write $\Riem^{>a}(M)$ for the space of all metrics $g$ with $\inf_M \scal(g) >a$ (so that $\Riem^+ (M)=\Riem^{>0}(M)$).
Given $\epsilon>0$ and another closed manifold $N$, the formula $(g,h) \mapsto g \oplus h$ defines a map 
\[
\nu: \Riem^{>\epsilon}(M) \times \Riem^{>-\epsilon}(N)\to \Riem^+ (M \times N)
\]
A very easy scaling argument shows that the inclusion $\Riem^{>\epsilon}(M) \to \Riem^+ (M)$ is a homotopy equivalence, whereas $\Riem^{>-\epsilon}(N)$ is contractible by \cite[Lemma 5.4]{EbertWiemeler}. Hence we may interpret $\nu$, up to homotopy, as a map 
\begin{equation}\label{eqn:productmap}
\nu_N : \Riem^+ (M) \to \Riem^+ (M \times N),
\end{equation}
whose homotopy class is independent of $\epsilon>0$. 
\end{construction}

\begin{rem}\label{rem:numap}
The homotopical behavior of $\nu_N$ depends strongly on $N$. 
\begin{enumerate} 
\item If $N$ does admit a metric of positive scalar curvature, the map $\nu_N$ factors through some $\Riem^{-\epsilon}(M)$ and is therefore homotopic to a constant. 
\item On the other hand, $\nu_N$ behaves nicely with respect to the index difference map $\inddiff$ (assuming spin structures throughout), in the sense that ($n:= \dim (N)$, $G:= \pi_1 (N)$)
\[
\xymatrix{
\pi_k (\Riem^+ (M)) \ar[d]^{(\nu_N)_*} \ar[r]^{\inddiff} & KO_{k+d+1} (*) \ar[d]^{\_ \times \ahat_G(N)}\\
\pi_k (\Riem^+ (M \times N)) \ar[r]^{\inddiff^G} & KO_{k+d+n+1} (C^* (G)) 
}
\]
commutes; here $\ahat(N) \in KO_n (C^* (G))$ is the Rosenberg index of $N$. If e.g. $N=S^1$, the right vertical map is injective and if $d \geq 6$, the top horizontal map is very much nontrivial by the main result of \cite{BERW}, so $(\nu_N)_*$ is nontrivial. 
\end{enumerate}
\end{rem}

Let us turn to the $4$-dimensional world. In light of the construction of the metrics $g_i$ in Theorem \ref{thm:ruberman}, it seems plausible that they also exhibit product stabilization. Our main aim is to confirm that this is indeed the case: 

\begin{thm}\label{thm:killing}
Let $M$ and $g_i:= f_i^* g_0$ be as in Theorem \ref{thm:ruberman} and let $N$ be a closed connected smooth manifold of dimension $n\geq 1$. Then the metrics $\nu_N (g_i)$ all lie in the same path component of $\Riem^+ (M \times N)$. 
\end{thm}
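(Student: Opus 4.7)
The plan is to deduce Theorem \ref{thm:killing} from the general cancellation theorem described in the abstract, which asserts that if $f : M \to M$ satisfies the relevant combination of conditions (A) $[T(f)] = 0 \in \Omega_5^{\SO}$ and (B) $f^* \in \Aut(I_M)$ belongs to the unit component, then $\nu_N(g)$ and $\nu_N(f^* g)$ lie in the same path component of $\Riem^+(M \times N)$. For the reduction, recall that up to homotopy $\nu_N(g) = g \oplus h$ for any fixed $h \in \Riem^{>-\epsilon}(N)$; after rescaling $g_0$ into $\Riem^{>\epsilon}(M)$ (which does not change its path component in $\Riem^+(M)$) we therefore have
\[
\nu_N(g_n) = (f_n^* g_0) \oplus h = (f_n \times \id_N)^*(g_0 \oplus h) = (f_n \times \id_N)^* \nu_N(g_0),
\]
so the theorem is equivalent to $(f_n \times \id_N)^*$ fixing the path component of $\nu_N(g_0)$. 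It thus suffices to check that each $f_n$ satisfies both (A) and (B).

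The pseudoisotopy $F_n : M \times I \to M \times I$ provided by Theorem \ref{thm:ruberman}, with $F_n(x,0) = (x,0)$ and $F_n(x,1) = (f_n(x), 1)$, verifies both conditions simultaneously. For (A), $F_n$ identifies the gluing relation $(x,0) \sim (x,1)$ of $M \times S^1$ with the relation $(x,0) \sim (f_n(x), 1)$ of $T(f_n)$, and hence descends to a diffeomorphism $M \times S^1 \cong T(f_n)$; since $M \times S^1 = \partial(M \times D^2)$, the class $[T(f_n)]$ vanishes in $\Omega_5^{\SO}$. For (B), the map $t \mapsto \mathrm{pr}_M \circ F_n(-,t)$ is a homotopy from $\id_M$ to $f_n$, so $f_n^*$ acts as the identity on $H^*(M; \bR)$, and in particular lies in the unit component of $\Aut(I_M)$.

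Because both (A) and (B) hold, the general cancellation theorem applies in every case (regardless of whether $\dim N = 1$ or $\dim N \geq 2$ and whether or not $M$ is spin) and yields Theorem \ref{thm:killing}; in the special case that $N$ carries a psc metric the conclusion is immediate from the remark, since $\nu_N$ is then null-homotopic. The main obstacle is therefore not in this deduction but in proving the general cancellation theorem itself, whose argument (as indicated in the abstract) combines the Frenck--Bantje rigidity of the $\Diff(M)$-action on $\Riem^+(M)$ in high dimensions with an explicit computation of $\pi_1(\MT\SO(4))$; the verification presented here is entirely routine.
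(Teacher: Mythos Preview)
Your deduction is correct and mirrors the paper's intended logic: Theorem~\ref{thm:killing} is a corollary of Theorem~\ref{thm:rigidity-dimension4}, and the pseudoisotopy supplied by Ruberman's theorem verifies both hypotheses~(A) and~(B). The paper does not spell out this reduction explicitly (it simply states Theorem~\ref{thm:rigidity-dimension4} as the more general result and remarks that only the mild homological conditions, rather than the full pseudoisotopy, are needed), so your write-up is exactly the routine verification the paper leaves to the reader.
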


By Remark \ref{rem:numap}, this is trivially true if $N$ has a psc metric. For $n \geq 3$, we could argue by pseudoisotopy theory (e.g. \cite[p. 11]{HatcherWagoner}) as follows. Since $\pi_1 (M)=1$, the pseudoisotopy obtruction of $f_i$ vanishes. Therefore, so does that of $f_i \times \id_N$, and if $n \geq 3$, the quoted result implies that $f_i \times \id_N$ is isotopic to the identity. In light of the construction of $g_i$, this implies Theorem \ref{thm:killing} for $\dim (N) \geq 3$ (the author was informed by Daniel Ruberman that a direct proof that $f_i \times \id_{S^1}$ is isotopic to the identity is possible, which also gives a proof of Theorem \ref{thm:killing} for $\dim (N) =1$).

The purpose of this note is to give a proof of Theorem \ref{thm:killing} that also works for $\dim (N)=1$ and $2$, is cobordism-theoretic in nature rather than through pseudoisotopy, and does not depend on an understanding how the diffeomorphisms $f_i$ are constructed. Moreover, we only need very mild homological conditions on $f_i$ and not that $f_i$ is pseudoisotopic to the identity. 

For the formulation of the result, recall that the \emph{mapping torus} $T(f)$ of $f \in \Diff (M)$ is $T(f):= (M\times [0,1])/ (x,1) \sim (f(x),0)$, which is a fibre bundle over $S^1$ with fibre $M$. If $M$ is oriented and $f$ is orientation--preserving, $T(f)$ has a natural orientation. Assigning to $f$ the cobordism class $[T(f)]\in \Omega_{d+1}^\SO$ ($d=\dim (M)$) gives a map
\begin{equation}\label{eqn:mappingtorusmap-SO}
\Gamma^+ (M^d):= \pi_0 (\Diff^+(M^d))\to \Omega_{d+1}^\SO, \; [f] \mapsto [T(f)],
\end{equation}
which is a group homomorphism. We remark that $\Omega^\SO_5 \cong \bZ/2$, via the Stiefel--Whitney number $[M] \mapsto w_2 w_3 (M)$. For mapping tori, $w_2 w_3 (T(f))$ can be computed in terms of the action of $f$ on $H^*(M;\bR)$ and $H^*(M;\bF_2)$ (see Lemma \ref{lem:kervaire-mappingtorus} below):
\begin{equation}\label{eqnw2w3mappingtorus}
w_2 w_3 ([T(f)]) = \dim (\Eig_1 (H^2 (f;\bR))) + \dim (\Eig_1 (H^2 (f;\bF_2))) \pmod 2 
\end{equation}
(we use the notation $H^k(f;R)$ for the map induced by $f$ in cohomology with coefficients in $R$, and the notation $\Eig_\lambda(F)$ for the $\lambda$-eigenspace of a linear endomorphism $F$). 
Let $I_M: H^2 (M;\bR) \times H^2 (M;\bR)\to \bR$ be the intersection form of $M$, a symmetric nondegenerate bilinear form on a finite-dimensional $\bR$-vector space. We let $\Aut (I_M) \subset \GL (H^2(M;\bR))$ be the orthogonal group of $I_M$. For $f \in \Diff^+ (M)$, the induced map $H^2(f;\bR)$ belongs to $\Aut(I_M)$. We view $\Aut (I_M)$ as a Lie group; as such it is isomorphic to the indefinite orthogonal group $O(b_2^+(M),b_2^-(M))$. It is well-known that $\pi_0 (\Aut I_M) \cong \bZ/2$ or $\cong (\bZ/2)^2$, depending on whether $I_M$ is definite or not. 
\begin{MainThm}\label{thm:rigidity-dimension4}
Let $f:M \to M$ be an orientation-preserving diffeomorphism of a $1$-connected closed smooth $4$-manifold. Let $N$ be a closed connected manifold of dimension $n \geq 1$. Then 
\[
(f \times \id)^*: \Riem^+ (M \times N) \to \Riem^+ (M \times N)
\]
is homotopic to the identity under each the following assumptions.
\begin{enumerate}
\item $n \geq 2$ and $M$ is spin;
\item $n \geq 2$, $M$ is not spin and $[T(f)] =0 \in \Omega^\SO_5$;
\item $n =1$, $M$ is spin, and $[H^2 (f;\bR)] =1 \in \pi_0(\Aut (I_M))$;
\item $n=1$, $M$ is not spin, $[T(f)]=0$ and $[H^2 (f;\bR)] =1 \in \pi_0(\Aut (I_M))$.
\end{enumerate}
\end{MainThm}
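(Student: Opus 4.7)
The plan is to apply the high-dimensional rigidity of the diffeomorphism action on $\Riem^+$, due to Frenck and Bantje and recalled in the introduction, to the manifold $W := M \times N$ of dimension $4 + n \geq 5$ and to the diffeomorphism $\phi := f \times \id_N$. Rigidity should produce a characteristic class $\chi(\phi)$ in a Madsen--Tillmann homotopy group associated with the tangential structure of $W$ (spin or oriented, depending on whether $M$ is spin), whose vanishing implies $\phi^* \simeq \id$ on $\Riem^+(W)$. Under Pontryagin--Thom, $\chi(\phi)$ is represented by the mapping torus $T(\phi) = T(f) \times N$ equipped with its natural tangential structure; in particular it should be the image of $[T(f)] \in \pi_1(\MT\theta(4))$ (with $\theta \in \{\SO,\Spin\}$) under an external product with $N$. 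This places the core obstruction inside $\pi_1(\MT\SO(4))$ (or its spin analog), independently of $N$, explaining the calculation of this group highlighted in the abstract.

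The second main ingredient is the computation of $\pi_1(\MT\SO(4))$ and $\pi_1(\MT\Spin(4))$, for which I would run the Atiyah--Hirzebruch spectral sequence together with the standard cofibre sequence relating $\MT\SO(n)$, $\MT\SO(n-1)$ and a suspension spectrum of $B\SO(n)$ (and its spin variant). The expectation is that in the oriented case $\pi_1(\MT\SO(4))$ is detected by two invariants of a mapping torus $T(f)$: first the Stiefel--Whitney number $w_2 w_3$, which by \eqref{eqnw2w3mappingtorus} precisely captures hypothesis (A) through the cobordism class $[T(f)] \in \Omega_5^\SO \cong \bZ/2$; second an invariant capturing the class of $H^2(f;\bR)$ in $\pi_0(\Aut(I_M))$, matching hypothesis (B). In the spin setting the $w_2w_3$-summand should be absent because $w_2(T(f)) = 0$ automatically, explaining why hypothesis (A) is dropped in cases (1) and (3).

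The case analysis then runs as follows. For cases (1) and (2), $n \geq 2$, and the external product with a class from $N$ lands in a part of $\pi_1(\MT\theta(4+n))$ where only the $w_2w_3$-piece can survive; this is killed by spinness in (1) and by hypothesis (A) in (2), while the $H^2$-piece passes through a trivial group because $N$ has $n \geq 2$. For $n = 1$ in cases (3) and (4), multiplication by $S^1$ preserves both detecting invariants, so both (A) and (B) are genuinely needed, and the precise combinations stated in the theorem are exactly what is required to kill the relevant images. I expect the main obstacle to be twofold: first, extracting from the Frenck--Bantje rigidity the precise characteristic class and verifying its compatibility with external products with closed manifolds $N$ of arbitrary dimension; second, the honest computation of $\pi_1(\MT\SO(4))$ and the identification of detecting invariants in forms matching (A) and (B), particularly the interplay between the $w_2w_3$-part and the $H^2$-part, which will require careful bookkeeping through the cofibre sequence.
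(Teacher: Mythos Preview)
Your overall strategy is right, but there is a genuine gap in how you handle the case $n\geq 2$. You propose to use the Madsen--Tillmann form of rigidity (Bantje's theorem) uniformly: the obstruction to $(f\times\id_N)^*\simeq\id$ is the image of $\tilde{\alpha}(f)\in\pi_1(\MT\theta(4))$ under the external product with $[N]$, landing in $\pi_1(\MT(\theta\oplus\vartheta)(4+n))$. You then assert that for $n\geq 2$ ``the $H^2$-piece passes through a trivial group,'' but give no mechanism. Nothing about the pairing $\pi_1(\MT\theta(4))\otimes\pi_0(\MT\vartheta(n))\to\pi_1(\MT(\theta\oplus\vartheta)(4+n))$ makes this obvious; the target depends on the tangential $2$-type of $N$ and is not computed. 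Without this step your argument does not explain why hypothesis (B) can be dropped in cases (1) and (2).

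The paper fills this gap by using \emph{two different} rigidity theorems rather than one. Frenck's theorem applies only in dimension $\geq 6$ but has a strictly smaller obstruction group: the action factors through the \emph{stable} cobordism group $\Omega_{\dim W+1}^{\theta'}$, not through $\pi_1(\MT\theta'(\dim W))$. For $n\geq 2$ one applies Frenck directly to $W=M\times N$; the obstruction is then $[T(f\times\id_N)]=[T(f)]\times[N]\in\Omega_{5+n}^{\theta\oplus\vartheta}$, which vanishes as soon as $[T(f)]=0\in\Omega_5^\theta$. This is automatic when $M$ is spin (since $\Omega_5^\Spin=0$) and is precisely hypothesis (A) otherwise. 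No $\pi_1(\MT\theta(4))$ computation enters at all, and this is exactly why (B) is absent from (1) and (2). Bantje's theorem, with its Madsen--Tillmann obstruction, is invoked only for $n=1$, where $\dim W=5$ and Frenck's theorem is unavailable; this is why (B) reappears in (3) and (4). The paper makes this dichotomy explicit in Proposition~\ref{prop:proofmainthm:generaloutline}.

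A secondary point concerns the calculation itself. The paper finds $\pi_1(\MT\SO(4))\cong(\bZ/2)^3$; the cofibre sequences you mention are used only to bound the order by $8$. The three detecting invariants are $w_2w_3[T(f)]$ together with $\det(H^2(f;\bR))$ and $\delta_+(H^2(f;\bR))$, and the latter two are produced not from the spectral sequence but from family index theory: spectrum maps $\MT\SO(4)\to\KO$ built from the de~Rham operator and the selfduality complex. Surjectivity is then verified on explicit diffeomorphisms of $S^2\times S^2$ and $\cp^2$. Your AHSS approach might reach the same answer, but the index-theoretic construction is what directly ties the invariants to hypothesis (B).
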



In Lemma \ref{lem:realizationofinvariants}, we give elementary examples of diffeomorphisms whose invariants in $\Omega^\SO_5$ and $\pi_0(\Aut (I_M))$ do not vanish. 

The proof of the Theorem rests on rigidity theorems for the action of diffeomorphism groups on $\Riem^+ (M)$ for high-dimensional $M$, in particular on the works by Frenck \cite{Frenck} (when $n \geq 2$) and Bantje \cite{Bantje} (when $n=1$). We give a review of these results in \S \ref{sec:rigidity}. What these results have in common is that they govern the action of $\Diff(M)$ on $\Riem^+ (M)$ in terms of cobordism theory; we give more precise statements below.
Using these results, the proof of Theorem \ref{thm:rigidity-dimension4} comes down to verifying that the mapping torus $T(f)$ vanishes in a certain cobordism group. When $n \geq 2$, the relevant cobordism group is just $\Omega_5^\SO$ or $\Omega^\Spin_5=0$. When $n =1$, the relevant cobordism group is the first homotopy group $\pi_1 (\MT \SO(4))$ or $\pi_1 (\MT \Spin(4))$ of the Madsen--Tillmann spectrum. In \S \ref{sec:calculation}, we give a detailed calculation of these groups. The answer we find accounts for the conditions of (3) and (4) of Theorem \ref{thm:rigidity-dimension4}. 

Let us now turn to the last of our main results, concerning a recent result by Auckly and Ruberman \cite[Theorem 1.14]{AucklyRuberman} that reads as follows. 
\begin{thm}[Auckly--Ruberman]\label{thm:auckrub}
For each $p\geq 0$ and $j \leq p$ with $p-j$ even, there are manifolds $M$ of the form $M=k(S^2 \times S^2)$ or $M=k(\cp^2 \sharp\overline{\cp^2} )$ for large $k$ and $g_0 \in \Riem^+ (M)$ such that 
\begin{enumerate}
\item $\ker (\pi_j (\Diff^+ (M))\to \pi_j (\Homeo^+ (M)))$ contains a free abelian group $A$ of infinite rank,
\item The map $o_*: \pi_j (\Diff^+ (M)) \to \pi_j (\Riem^+ (M))$ induced by the orbit map $o: f \mapsto f^* g_0$, is injective on the free abelian subgroup $A$ of (1).
\end{enumerate}
\end{thm}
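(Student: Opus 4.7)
The plan is to combine an explicit construction of ``barbell'' / generalized Dehn-twist diffeomorphisms with the families Bauer--Furuta invariant as the detection mechanism, while topological triviality comes from Freedman--Quinn--Perron theory.

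First I would produce the required elements of $\pi_j(\Diff^+(M))$ from Dehn-twist constructions supported near pairs of dual embedded $2$-spheres in $M = k(S^2 \times S^2)$ or $M = k(\cp^2 \sharp \overline{\cp^2})$. A single such pair supports a compactly supported diffeomorphism (the ``barbell diffeomorphism''), and an $S^j$-parameter family of embeddings of the supporting data produces a class in $\pi_j(\Diff^+(M))$. Choosing $k$ large enough lets one fit arbitrarily many pairwise disjoint such configurations, from which one manufactures infinitely many candidate generators of a free abelian subgroup.

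For topological triviality (part (1)), I would invoke Freedman--Quinn--Perron: on a $1$-connected $4$-manifold the natural map $\pi_0(\Homeo^+(M)) \to \Aut(I_M)$ has a completely understood kernel, and the barbell diffeomorphisms, which act trivially on $I_M$, are topologically isotopic to the identity. In $j$-parameter families one assembles the pointwise topological isotopies into an ambient trivialization of the entire $S^j$-family via topological isotopy extension, placing the constructed classes in the kernel of $\pi_j(\Diff^+(M)) \to \pi_j(\Homeo^+(M))$.

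The genuine difficulty is the smooth detection in part (2), for which I would use the \emph{families Bauer--Furuta invariant}. To an $S^j$-family of $\mathrm{spin}^c$ structures on $M$, equivalently a family induced by the spherical family of diffeomorphisms acting on a fixed base metric $g_0$, one associates a class in an equivariant stable cohomotopy group through a parameterized monopole map. Since the orbit map $\Diff^+(M) \to \Riem^+(M)$, $f \mapsto f^* g_0$, is precisely the input used to build this family, non-vanishing of the invariant on the spherical family forces non-triviality already in $\pi_j(\Riem^+(M))$, proving (2). The main obstacle is thus twofold: constructing the family Bauer--Furuta invariant with sufficient equivariant structure to be sensitive to distinct barbell configurations, and carrying out a wall-crossing / localization computation showing that distinct barbells yield linearly independent classes. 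This is the technical heart of the Auckly--Ruberman argument, and it is where the gauge-theoretic input does the real work that the cobordism-theoretic methods of the present paper cannot reach.
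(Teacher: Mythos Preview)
The paper does not prove Theorem~\ref{thm:auckrub}. It is quoted verbatim as \cite[Theorem~1.14]{AucklyRuberman} and serves only as input for Theorem~\ref{thm:hogherhomotopy}; no argument for it appears anywhere in the text. So there is nothing in the paper to compare your proposal against.

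As a sketch of the Auckly--Ruberman argument itself, your outline is in the right neighborhood but has some inaccuracies. The diffeomorphisms Auckly and Ruberman use are not barbell diffeomorphisms in the Budney--Gabai sense; they are built from iterated parametric versions of the reflection/twist diffeomorphisms already appearing in Ruberman's earlier work, and the $j$-parameter families arise by iterating a $1$-parameter construction rather than by parametrizing embeddings of barbell neighborhoods. For topological triviality in higher $\pi_j$ you cannot simply ``assemble pointwise topological isotopies''; one needs the contractibility results for components of $\Homeo$ of $1$-connected $4$-manifolds due to Quinn and Perron, applied to the clasper-like support of the construction. The detection side is indeed gauge-theoretic, via family Seiberg--Witten (or Bauer--Furuta) invariants and a wall-crossing computation, so that part of your plan is on target. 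But none of this is the present paper's concern: here the theorem is a black box.
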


\begin{MainThm}\label{thm:hogherhomotopy}
Let $M$ and $j$ as in Theorem \ref{thm:auckrub}. For any $N$ of positive dimension and any $j>0$, the map 
\[
(\nu_N)_*:\pi_j (\Riem^+ (M)) \to \pi_j (\Riem^*(M \times N)) \otimes \bQ
\]
vanishes on the subgroup $o_* (A)$ described in Theorem \ref{thm:auckrub}. 
\end{MainThm}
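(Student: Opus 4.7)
The plan is to apply the rigidity theorems of Frenck (for $n\geq 2$) and Bantje (for $n=1$) from Section \ref{sec:rigidity} to the stabilised manifold $M\times N$, which has dimension $4+n\geq 5$. Represent a class from Theorem \ref{thm:auckrub}(2) by a family $\phi:S^j \to \Diff^+(M)$ lying in the kernel of $\pi_j(\Diff^+(M))\to \pi_j(\Homeo^+(M))$. Then $(\nu_N)_*[\phi^*g_0] = [(\phi\times \id_N)^*(g_0 \oplus h)]$ for an auxiliary metric $h\in \Riem^{>-\epsilon}(N)$, and the cited rigidity theorems identify this orbit class, up to the appropriate Madsen--Tillmann-type transfer, with a cobordism-theoretic invariant of the clutched smooth $(M\times N)$-bundle $E(\phi)\times N \to S^{j+1}$. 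Here $E(\phi)$ denotes the clutching of the family $\phi$ along $S^{j+1}$ and the product with $N$ is the clutching of $\phi\times \id_N$.

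Rationally, the target cobordism group is detected by a complete set of characteristic numbers of $E(\phi)\times N$, i.e.\ by integrals
\[
\int_{E(\phi)\times N} c
\]
where $c$ runs over polynomials in the Pontryagin classes (and, in the non-spin case, the Euler class) of the vertical tangent bundle $T_v(E(\phi)\times N /S^{j+1}) \cong \mathrm{pr}_1^*T_v E(\phi) \oplus \mathrm{pr}_2^* TN$. By the Whitney sum formula and Fubini, each such characteristic number decomposes as a $\bQ$-linear combination of products $\bigl(\int_{E(\phi)} a\bigr)\cdot \bigl(\int_N b\bigr)$ with $a$ a polynomial in the characteristic classes of $T_v E(\phi)$.

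The decisive point is that the clutched bundle $E(\phi)\to S^{j+1}$ is \emph{topologically trivial}, because $\phi$ is null-homotopic in $\Homeo^+(M)$. Choose a topological trivialisation $E(\phi)\cong_{\mathrm{top}} M\times S^{j+1}$; by Novikov's theorem on the topological invariance of rational Pontryagin classes, and by the topological invariance of the Euler class of an oriented microbundle, one has
$p_i(T_v E(\phi)) = \mathrm{pr}_M^* p_i(TM)$ and $e(T_v E(\phi)) = \mathrm{pr}_M^* e(TM)$ in $H^*(E(\phi);\bQ)$. Hence any polynomial $a$ in these classes lies in the K\"unneth summand $H^*(M;\bQ)\otimes H^0(S^{j+1};\bQ) \subset H^*(M\times S^{j+1};\bQ)$, and the pairing of such a class with $[E(\phi)] = [M]\times [S^{j+1}]$ vanishes because $[S^{j+1}]$ pairs trivially with $H^0(S^{j+1})$ for $j\geq 0$. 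Therefore $\int_{E(\phi)} a = 0$ for every such $a$.

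Combining the last two paragraphs, every characteristic number of $E(\phi)\times N$ vanishes, the rigidity-theoretic cobordism invariant is zero in the rationalised target group, and hence $(\nu_N)_*[\phi^*g_0]=0$ in $\pi_j(\Riem^+(M\times N))\otimes \bQ$. The main obstacle is to invoke the Frenck--Bantje rigidity theorems in precisely the form required, namely to identify the rationalised orbit class with the characteristic-number invariant of $E(\phi)\times N$ just described and to verify that rational vanishing of this invariant really is equivalent to the vanishing of all the characteristic numbers computed above; once this identification is in hand, the remainder of the argument (Novikov plus K\"unneth) is essentially formal.
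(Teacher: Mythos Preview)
Your approach is essentially that of the paper, but there is one genuine error and one simplification you are missing.

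\textbf{The error.} Frenck's theorem (Theorem \ref{thm:georg}) is a statement about $\pi_0$ only: it factors $\Gamma(M\times N)\to \pi_0(\hAut(\Riem^+(M\times N)))$ through the ordinary bordism group $\Omega^{\theta}_{5+n}$. For $j\geq 1$ this says nothing. What you need throughout (for every $n\geq 1$, not only $n=1$) is Bantje's space--level result (Theorem \ref{thm:jannes}) and its consequence \eqref{thm:jannesgeorg}, which on $\pi_{j+1}$ gives the factorisation of the orbit map through $\pi_{j+1}(\MT(\theta\oplus\vartheta)(4+n))$. This is not an ordinary bordism group; but rationally it \emph{is} detected by the Miller--Morita--Mumford classes, and for a bundle over $S^{j+1}$ those are precisely the characteristic numbers you compute. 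So your argument goes through once you replace ``Frenck for $n\geq 2$'' by ``Bantje for all $n\geq 1$'' and interpret ``cobordism group'' as $\pi_{j+1}$ of the Madsen--Tillmann spectrum. You flag exactly this identification as the ``main obstacle'', and it is indeed where your write-up needs repair.

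\textbf{The simplification.} The paper avoids your K\"unneth/Fubini step entirely. Using the product diagram \eqref{eqn:priductdiagramMTmaps} (with $S^{j+1}$ in place of $S^1$), one sees that it suffices to show $\alpha_*(\phi)$ already vanishes in $\pi_{j+1}(\MT\theta(4))\otimes\bQ$, \emph{before} taking any product with $N$. That group is detected by MMM classes built from $p_1$ and $e$ alone (since $H^*(B\SO(4);\bQ)=\bQ[p_1,e]$), and these vanish for $E(\phi)\to S^{j+1}$ because $E(\phi)$ is a topologically trivial bundle and rational MMM classes are defined for topological manifold bundles \cite{ERW14} (this is where Novikov enters). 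Your route reaches the same conclusion but carries the factor $N$ along unnecessarily.
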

The proof is along similar lines as that of the $n=1$ case of Theorem \ref{thm:rigidity-dimension4}. This time, we have a map $\pi_j (\Diff^+ (M))\to \pi_{j+1} (\MT \SO(4))$ or $\MT\Spin (4)$. Though for $j\geq 1$, these homotopy groups are very hard to calculate, one checks easily that classes in the kernel of $\pi_j (\Diff^+(M)) \to \pi_j (\Homeo^+(M))$ are mapped to finite order elements. 

\subsection{Acknowledgements}

The idea for Theorem \ref{thm:rigidity-dimension4} was born during a talk by Hokuto Konno at the Oberwolfach workshop on positive scalar curvature in 2024; and I am grateful to him for sharing his insights with me afterwards, and also to point me to the paper \cite{AucklyRuberman}. I am also grateful to Daniel Ruberman for comments on this paper.

\section{Proof of Theorem \ref{thm:existence-questionstably}}\label{sec:existence}

\begin{proof}[Proof of Theorem \ref{thm:existence-questionstably}]
Suppose that $M_1$ is a $\theta$-manifold having a psc metric and such that $[M_0]=[M_1]\in \Omega_4^\theta$. By surgery below the middle dimension (which works in dimension $4$ without change), $[M_1]$ contains a representative $[M_2]$ such that the structure map $M_2 \to B$ is $2$-connected. 
Since we can obtain $M_2$ from $M_1$ by doing surgeries along embedded spheres of dimension $\leq 1$, i.e. of codimension $\geq 3$, the Gromov--Lawson surgery theorem implies that $M_2$ carries a psc metric as well. Theorem C of \cite{Kreck} shows that $M_2 \sharp \ell (S^2 \times S^2) \cong M_0 \sharp k (S^2 \times S^2)$ for suitable $k,\ell$. Since $M_2$ has a psc metric, so does $M_2 \sharp \ell (S^2 \times S^2) \cong M_0 \sharp k (S^2 \times S^2)$, proving claim (1). 

For claim (2), let $M_2$ be as above and note that $M_2 \times N$ carries a psc metric (by shrinking the metric on $M_2$). Let $B_N$ be the normal $2$-type of $N$. The structure maps $M_i \times N \to B \times B_N$ are $2$-connected, and when $\theta\times\theta_N: B \times B_N \to BO \times BO$ denotes the sum of the two tangential structures, we have $[M_0\times N]=[M_2 \times N] \in \Omega_{4+\dim (N)}^{\theta \times \theta_N}$. Now \cite[Theorem 1.5]{EbertFrenck} implies that $M_0 \times N$ has a psc metric.
\end{proof}

\section{Rigidity theorem for the diffeomorphism action in high dimensions}\label{sec:rigidity}

In this section, we describe the aforementioned rigidity theorems and show how they are used in the proof of Theorem \ref{thm:rigidity-dimension4}. This will lead to a cobordism-theoretic criterion concerning $f$; if $n \geq 2$, this criterion is easily checked, and the relevant calculation in the case $n=1$ is carried out in \S \ref{sec:calculation}. 

\subsection{Frenck's theorem}\label{subsec:georgstheorem}

Let us now actually state the rigidity theorems, starting with \cite{Frenck} and referring the reader to \cite{Frenck} and \S 1.7--1.8 of the survey \cite{BotvinnikEbert} for more details. 

For a space $X$, denote by $\hAut(X)$ the space of all homotopy equivalences $X \to X$. This is a topological monoid (composition of maps). Of course $\hAut(X)$ is not a topological group, but it is \emph{grouplike} in the sense that $\pi_0 (\hAut(X))$, with the composition induced by that of $\hAut(X)$, is a group.

For a closed manifold $M$, the action of the diffeomorphism group $\Diff(M)$ on $\Riem^+ (M)$ provides a homomorphism 
\[
\Diff(M) \to \hAut(\Riem^+(M))
\]
of topological monoids. On level of path components, it induces a group homomorphism 
\begin{equation}\label{eqn:actionhomomorphism}
\Gamma (M):= \pi_0 (\Diff(M)) \to \pi_0 (\hAut(\Riem^+ (M))).
\end{equation}
The source group is the group of isotopy classes of diffeomorphisms of $M$ and classically called the \emph{mapping class group} of $M$. In favorable cases, quite a bit is known about the structure of $\Gamma(M)$, but it typically is a complicated and large group.

The target group is the group of homotopy classes of homotopy self-equivalences of the (mysterious) space $\Riem^+ (M)$. We cannot expect to say anything sensible about $\pi_0 (\hAut(\Riem^+ (M)))$ unless $\Riem^+ (M)=\emptyset$ or $\Riem^+ (M)\simeq *$ in which case it is the trivial group. 

The surprising result from \cite{Frenck} is that when $\dim (M) \geq 6$, even though source and target of \eqref{eqn:actionhomomorphism} are complicated/intractable, the homomorphism \eqref{eqn:actionhomomorphism} itself is relatively easy to control in terms of cobordism theory. 

The precise statement involves the notion tangentially structured diffeomorphisms which we briefly recall. Let $\theta: B \to BO$ be a map, and let us write $\theta(n):B(n) \to BO(n)$ for the homotopy pullback along $BO(n) \to BO$ and $V^\theta_n \to B(n)$ for the pullback of the universal vector bundle along $\theta(n)$. 
\begin{defn}
A \emph{$\theta$-structure} on a $d$-manifold $M$ is a bundle map (fibrewise isomorphisms) $\ell: TM \to V^\theta_{d}$. 
We write $\Bun (TM,\theta)$ for the space of bundle maps  $TM \to V^\theta_d$, with the compact-open topology and define 
\begin{equation}\label{eqn:structuredBDiff}
B \Diff^{\theta}(M) := E \Diff(M) \times_{\Diff(M)} \Bun (TM,\theta) \to B \Diff(M).
\end{equation}
\end{defn}
This classifies smooth $M$-bundles $\pi: E \to X$, together with a bundle map $T_v E \to V_d^\theta$ from its vertical tangent bundle; the map \eqref{eqn:structuredBDiff} forgets the bundle data. We mention that $B \Diff^\theta(M)$ is often empty or disconnected and hence not the classifying space of a topological group; in \cite[Examples 1.32]{BotvinnikEbert} we worked out the homotopy type in some simple cases.

As explained in \cite[\S 1.7]{BotvinnikEbert}, it is conceptually useful to think about $B \Diff^\theta (M)$ as the space of all $(N,\ell)$, where $N$ is a closed manifold which is diffeomorphic to $M$ and $\ell: TN \to V_d^\theta$ is a $\theta$-structure on $N$. In light of this, a point $\ell \in \Bun (TM,\theta)$ gives rise to a basepoint $(M,\ell) \in B \Diff^{\theta}(M)$, and we define
\[
\Gamma^\theta (M,\ell) := \pi_1 (B \Diff^{\theta}(M),(M,\ell)).
\]
If the precise choice of $\ell$ does not matter, we just write $\Gamma^\theta(M)$. The map \eqref{eqn:structuredBDiff} induces on fundamental groups a homomorphism 
\begin{equation}\label{eqn:forgetfulhom}
\Gamma^\theta (M,\ell) \to \Gamma(M)
\end{equation}
which in general is neither injective nor surjective. 
\begin{rem}\label{rem:image-forgetmcg}
Three cases deserve special attention.
\begin{enumerate}
\item If $\theta:BO \to BO$ is the identity, $\Bun(TM,\theta) \simeq *$, so \eqref{eqn:forgetfulhom} is an isomorphism. 
\item If $\theta:B\SO \to BO$, $\ell$ corresponds to an orientation on $M$, and \eqref{eqn:forgetfulhom} is injective with image $\Gamma^+ (M):= \pi_0 (\Diff^+ (M))$. 
\item If $\theta: B \Spin \to B O$, $\ell$ corresponds to a spin structure on $M$. Then \eqref{eqn:forgetfulhom} has kernel of order $2$ and its image consists of those mapping classes which fix $\ell$ up to isomorphism. If in addition $M$ is $1$-connected, the image of \eqref{eqn:forgetfulhom} agrees with $\Gamma^+(M)$. 
\end{enumerate}
\end{rem}
Now assume that $f\in \Diff(M)$ is such that $[f] \in \Gamma(M)$ lies in the image of \eqref{eqn:forgetfulhom}. A choice of a preimage in $\Gamma^\theta(M,\ell)$ gives rise to a bundle map $T_v T(f) \to V_d^\theta$ from the vertical tangent bundle of the mapping torus. Since $T T(f) = T_v T(f) \oplus \bR$, we get a bundle map $TT(f) \to V_{d+1}^\theta$, which gives the manifold $T(f)$ a $\theta$-structure. We therefore get an element $[T(f)] \in \Omega_{d+1}^\theta$, the bordism group of $(d+1)$-manifolds with stable tangential $\theta$-structure\footnote{Examples (for $B=B\Spin$) show that $[T(f)]$ actually depends on the choice of the lift of $[f]$ along \eqref{eqn:forgetfulhom}. This won't cause any trouble for us.}. This construction gives a map 
\begin{equation}\label{eqn:mappingtorusmap-theta}
\Gamma^\theta (M,\ell) \to \Omega_{d+1}^\theta,
\end{equation}
generalizing \eqref{eqn:mappingtorusmap-SO}. Just as the map \eqref{eqn:mappingtorusmap-SO}, the more general map \eqref{eqn:mappingtorusmap-theta} is a group homomorphism (see \cite[Corollary 3.30]{Frenck} for the proof in the general case). 

\begin{thm}[Frenck \cite{Frenck}]\label{thm:georg}
Let $M$ be a closed manifold of dimension $d \geq 6$ and let $\ell \in \Bun(TM,\theta)$ be a $2$-connected $\theta$-structure on $M$\footnote{In other words, the map $M \to B$ underlying $\ell$ is $2$-connected}. Then there exists a group homomorphism $F: \Omega^\theta_{d+1} \to \pi_0 (\hAut(\Riem^+ (M)))$ such that the diagram 
\[
\xymatrix{
\Gamma^\theta(M,\ell) \ar[d]_{\eqref{eqn:forgetfulhom}} \ar[r]^-{\eqref{eqn:mappingtorusmap-theta}} & \Omega^\theta_{d+1} \ar[d]^{F}\\
\Gamma(M) \ar[r]^-{\eqref{eqn:actionhomomorphism}} & \pi_0 (\hAut(\Riem^+ (M)))
}
\]
commutes. 
\end{thm}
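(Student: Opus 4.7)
My plan is to define $F$ via the parametric Gromov--Lawson--Chernysh cobordism theorem and to identify $F([T(\tilde f)])$ with $f^*$ by means of the twisted cylinder. The central technical input is \cite[Theorem~1.2]{EbertFrenck}: every $\theta$-cobordism $W \colon M \leadsto M'$ whose end inclusions are $2$-connected induces a homotopy equivalence $\Phi_W \colon \Riem^+(M) \to \Riem^+(M')$, well-defined up to homotopy, depending only on the rel-boundary $\theta$-bordism class of $W$, and functorial under stacking of cobordisms.

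For $[N] \in \Omega^\theta_{d+1}$ I would define $F([N])$ as follows. Form the interior connect sum $W_N := (M \times I) \,\sharp\, N$, equipped with the $\theta$-structure assembled from $\ell$ and the given one on $N$. Surgery below the middle dimension in the interior of $W_N$ (possible because $\dim W_N \geq 7$) makes both end inclusions $M \hookrightarrow W_N$ $2$-connected; set $F([N]) := [\Phi_{W_N}]$. Well-definedness follows because any $\theta$-bordism $V^{d+2}$ from $N_0$ to $N_1$ yields a rel-boundary $\theta$-bordism $(M \times I \times I) \,\sharp\, V$ between $W_{N_0}$ and $W_{N_1}$. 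Additivity follows because the stacking $W_{N_0} \cup_M W_{N_1}$ is rel-boundary bordant to $W_{N_0 \sqcup N_1}$ (slide the two connect-sum loci apart), combined with the identity $[N_0 \sqcup N_1] = [N_0] + [N_1]$ in $\Omega^\theta_{d+1}$ and the stacking-functoriality of $\Phi$. For the commutativity, given $\tilde f \in \Gamma^\theta(M,\ell)$ with underlying $f \in \Gamma(M)$, introduce the \emph{twisted cylinder} $C_{\tilde f}$: the bare manifold is $M \times I$, its boundary is identified with $M \sqcup M$ via $(f, \id)$, and its $\theta$-structure is inherited from $T(\tilde f)$ by cutting along a fiber. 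Direct inspection shows $\Phi_{C_{\tilde f}} \simeq f^*$: a metric on $M$ is pulled back via $f$ at the bottom, extended as a product along $I$, and restricted to the top via the identity. It remains to verify that $C_{\tilde f}$ and $W_{T(\tilde f)}$ represent the same rel-boundary $\theta$-bordism class. Their closing-ups are $T(\tilde f)$ and $(M \times S^1) \,\sharp\, T(\tilde f)$ respectively, differing by the class of $M \times S^1$ equipped with the bounding $\theta$-structure on $S^1$ coming from closing up the trivial cylinder; this class vanishes in $\Omega^\theta_{d+1}$ because $M \times S^1 = \partial(M \times D^2)$ as $\theta$-manifolds, and a standard handle-trading argument promotes this closed-manifold bordism to the required rel-boundary one.

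The main technical obstacle is the parametric cobordism theorem itself, which I would invoke as a black box from \cite{EbertFrenck}. Beyond that, the most delicate step is the $\theta$-structure bookkeeping in the identification $C_{\tilde f} \sim W_{T(\tilde f)}$ rel boundary: one must match up the $\theta$-structures on the twisted cylinder, on the mapping torus, on the connect-sum cobordism, and on the bounding disk bundle. This is conceptually natural but requires care, and leans on the dimension hypothesis $d \geq 6$ together with the $2$-connectivity of $\ell$ to justify the surgery steps that convert each of the geometric equivalences above into rel-boundary $\theta$-bordisms.
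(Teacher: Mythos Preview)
The paper does not supply its own proof of this theorem; it quotes the result from \cite{Frenck} and only records that ``the main ingredient for Theorem~\ref{thm:georg} is the Gromov--Lawson--Chernysh surgery theorem \cite[Theorem 1.2]{EbertFrenck} and Igusa's $2$-index theorem \cite[Chapter VI]{Igusa}.'' Your proposal is a faithful and essentially correct outline of Frenck's argument: define $F$ by the connect-sum cobordism $(M\times I)\sharp N$, use the parametric cobordism theorem for well-definedness and stacking-additivity, and identify $F([T(\tilde f)])$ with $f^*$ via the twisted cylinder.

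One remark on packaging. You invoke \cite[Theorem 1.2]{EbertFrenck} as a black box that delivers a homotopy equivalence $\Phi_W$ which is \emph{invariant under rel-boundary $\theta$-bordism}. That invariance is exactly where Igusa's $2$-index theorem enters: to compare two admissible cobordisms via a $(d{+}2)$-dimensional bordism one needs a handle decomposition of the latter without index $0,1,d{+}1,d{+}2$ handles, and this is what Igusa supplies (and is the reason for the hypothesis $d\geq 6$ rather than $d\geq 5$). So your sketch and the paper's two-line summary agree; you have simply absorbed the second named ingredient into the first. The only step you flag as delicate---matching the $\theta$-structures when comparing $C_{\tilde f}$ with $W_{T(\tilde f)}$ rel boundary---is indeed the place where the bookkeeping must be done carefully, but the argument you indicate (close up, compare in $\Omega^\theta_{d+1}$, then promote via handle trading) is the standard one and goes through.
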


The main ingredients for Theorem \ref{thm:georg} are the Gromov--Lawson--Chernysh surgery theorem \cite[Theorem 1.2]{EbertFrenck} and Igusa's $2$-index theorem \cite[Chapter VI]{Igusa}. 

Theorem \ref{thm:georg} was preceeded by \cite[Theorem 4.1.2]{BERW} and \cite[Theorem 4.1.2]{ERW19}. Both these results established (for some special $M$) diagrams similar to the one in \ref{thm:georg}, but with an a priori unknown abelian group in place of $\Omega^\theta_{d+1}$. 

\begin{rem}
Let $[f] \in \Gamma^\theta(M,\ell)$. 
We emphasize that the theorem gives a \emph{sufficient} condition for $f^*\sim \id \in \hAut(\Riem^+(M))$: if the map underlying $\ell$ is $2$-connected and if $[f]$ maps to $0\in \Omega_{d+1}^\theta$ under \eqref{eqn:mappingtorusmap-theta}, then $f^* \simeq \id \in \hAut (\Riem^+(M))$. 

A \emph{necessary} condition for $f^* \simeq \id$ is as follows. Regardless of $d \geq 6$ and $\ell$ being $2$-connected or not, $f^* \sim \id$ implies that the mapping torus $T(f)$ carries a psc metric. It follows that the image of $[f]$ under \eqref{eqn:mappingtorusmap-theta} lies in the subgroup $(\Omega_{d+1}^\theta)_+ \subset \Omega_{d+1}^\theta$ of cobordism classes having a psc representative. 

Therefore for $[f] \in \Gamma^\theta(M,\ell)$, we deduce
\[
[T(f)] = 0 \in \Omega_{d+1}^\theta \stackrel{d \geq 6, \ell \, 2-\text{connected}}{\Rightarrow} f^* \sim \id \Rightarrow [T(f)] \in (\Omega_{d+1}^\theta)_+. 
\]
In the case of $\theta: B \Spin \to BO$, the group $(\Omega_{d+1}^\Spin)_+ \subset \Omega_{d+1}^\Spin$ coincides with the kernel of the $\alpha$-invariant $\ahat: \Omega_{d+1}^\Spin \to \KO_{d+1}$, by the main result of \cite{Stolz}. Usually $(\Omega_{d+1}^\Spin)_+$ is not the trivial subgroup. 

More generally, one can phrase the necessary condition more succinctly in terms of the Stolz exact sequence from \cite{Stolz2}
\[
\ldots \to \Omega_{d+1}^{\theta,\psc} \stackrel{F}{\to} \Omega_{d+1}^{\theta} \stackrel{G}{\to} R_{d+1}^\theta \stackrel{\partial}{\to} \Omega_{d}^{\theta,\psc} \to \ldots.
\]
Here $\Omega_{d}^{\theta,\psc}$ is the cobordism group of closed $d$-dimensional $\theta$-manifolds with psc metrics (and psc cobordisms); $F$ forgets the psc metrics and $(\Omega_{d+1}^\theta)_+= \im (F)$; $R_{d+1}^\theta$ is a relative group of compact $(d+1)$-dimensional $\theta$-manifolds with psc metrics on the boundary and $\partial$ takes the boundary. So for $[f] \in \Gamma^\theta(M,\ell)$, we deduce
\[
[T(f)] = 0 \in \Omega_{d+1}^\theta \stackrel{d \geq 6, \ell \, 2-\text{connected}}{\Rightarrow} f^* \sim \id \Rightarrow G([T(f)])=0\in R_{d+1}^\theta. 
\]
One might speculate whether one of the above implications is an equivalence, but that is a wide open question.
\end{rem}

\subsection{Bantje's theorem}

The case $n=1$ of Theorem \ref{thm:rigidity-dimension4} requires us to use the more involved (to prove, state and use) rigidity result \cite[Theorem 10.4.1]{Bantje}, which is stronger that Theorem \ref{thm:georg} in three aspects.
\begin{enumerate}
\item It applies in dimension $d\geq 5$ as opposed to $d \geq 6$,
\item it provides a space--level variant of the diagram in Theorem \ref{thm:georg},
\item it applies to manifolds with nonempty boundary (we won't discuss this here). 
\end{enumerate}
However, in the case where both results apply, Theorem \ref{thm:georg} has a slightly stronger conclusion (reflected in the stronger hypotheses of Theorem \ref{thm:rigidity-dimension4} in the case $n=1$). 
Having given this brief overview, let us turn to the actual statement, quoting \cite[Theorem 10.4.1]{Bantje} (and explaining its constitutents afterwards). 

\begin{thm}[Bantje]\label{thm:jannes}
Let $M$ be a closed manifold of dimension $d \geq 5$ and let $\theta: B \to BO$ be a map. Then there is a homotopy cartesian diagram
\begin{equation}\label{eqn:jannes}
\xymatrix{
\Riem^+ (M) \hq \Diff(M) \ar[d]  & B \Diff^{\theta,\psc} (M)^{(2)} \ar[d] \ar[r] \ar[l] & \fX \ar[d] \\
B \Diff (M) & B \Diff^\theta (M)^{(2)} \ar[l] \ar[r]^{\alpha} & \Omega^\infty \MT \theta(d). 
}
\end{equation}
\end{thm}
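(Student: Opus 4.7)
The plan is to build the right-hand square by a parametrized Pontryagin–Thom / scanning construction in the spirit of Madsen–Weiss and Galatius–Madsen–Tillmann–Weiss, and to recognize the left-hand square tautologically as a homotopy-orbit square. I would first introduce two cobordism categories: a category $\mathcal{C}_\theta$ whose morphisms are compact $d$-dimensional $\theta$-cobordisms, and a variant $\mathcal{C}_\theta^\psc$ whose morphisms additionally carry a psc metric which is of product form near the boundary. The Galatius–Madsen–Tillmann–Weiss theorem identifies $\Omega B \mathcal{C}_\theta \simeq \Omega^\infty \MT\theta(d)$; a parallel argument in the psc category, using positive-scalar-curvature cobordism data in place of bare cobordisms, produces a space $\fX$ together with a forgetful map $\fX \to \Omega^\infty\MT\theta(d)$ fitting into the right column.

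Next I would identify the relevant path components. Because $\Omega^\infty \MT\theta(d)$ is the group completion, its path component corresponding to $M$ receives a canonical map from $B\Diff^\theta(M)$. Restricting to the superscript $(2)$, meaning the union of components represented by bundles whose fiber structure map is $2$-connected, one uses parametrized surgery (Galatius–Randal-Williams, applied fiberwise over the classifying space of $\theta$-structured $M$-bundles) together with the Gromov–Lawson–Chernysh surgery theorem \cite{EbertFrenck} to prove that $B \Diff^\theta(M)^{(2)}$ is a whole path component of the group completion of $\mathcal{C}_\theta$, and similarly that $B \Diff^{\theta,\psc}(M)^{(2)}$ is the corresponding component in the psc world. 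This step is exactly the \emph{rigidity} content of the theorem: it says that, after restricting to $2$-connected structures, the $\theta$-structured moduli space is already the relevant component of the infinite loop space.

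The left-hand square is essentially formal: $\Riem^+(M) \hq \Diff(M)$ is by definition the classifying space of $M$-bundles with fiberwise psc metric, which is precisely the full $B \Diff^\psc(M)$; forgetting the $\theta$-structure on the top and on the bottom gives the square, and this square is homotopy cartesian because the fiber on either side is $\Bun(TM, \theta)$, the space of $\theta$-structures on $TM$. Granting this and the identification of the right-hand column from paragraph two, one only needs to verify that equipping a $2$-connected $\theta$-structured $M$-bundle with a fiberwise psc metric is, up to homotopy, the same as lifting the scanning map $\alpha$ through $\fX \to \Omega^\infty \MT\theta(d)$. This follows from the compatibility of the two Madsen–Weiss-type identifications set up above, but requires some care: one must argue that the psc-structure on a single fiber propagates to a fiberwise psc metric after passing to a $2$-connected representative, and this is where the Chernysh–Walsh improvements of the Gromov–Lawson surgery theorem are crucial.

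The main obstacle is pushing the whole machinery down to $d = 5$. In dimension $\geq 6$, one can run parametrized Morse theory without constraint and the argument is close to Frenck's \cite{Frenck}. In dimension $5$, parametrized surgery in the middle dimension is delicate: one needs Igusa's two-index theorem \cite[Chapter VI]{Igusa} to realize handle cancellations in families, and the psc-version of this requires that the surgery theorem applies to $2$-spheres of codimension $3$ inside a $5$-manifold, which is exactly the borderline case. A second technical subtlety, which I expect to be the place where the hypotheses of Theorem \ref{thm:rigidity-dimension4} are slightly weaker in case (3)–(4) than in case (1)–(2), is that the space-level identification of $\fX$ retains $\pi_0$-information that is lost when one passes to $\pi_0(\hAut(\Riem^+(M)))$ as in Theorem \ref{thm:georg}; keeping track of this extra information is exactly what forces the conditions on $[T(f)] \in \Omega^\SO_5$ and on $[H^2(f;\bR)]$ in the $n=1$ case.
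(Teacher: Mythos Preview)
The paper does not prove this theorem at all: it is quoted as \cite[Theorem 10.4.1]{Bantje} and used as a black box to extract the diagram \eqref{thm:jannesgeorggroups}. There is thus no ``paper's own proof'' to compare your proposal against.

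The paper does, however, remark on what goes into Bantje's argument: ideas of Perlmutter \cite{Perlmutter1}, \cite{Perlmutter2} (a high-dimensional elaboration of the Madsen--Weiss proof) together with the Gromov--Lawson--Chernysh surgery theorem. Your sketch is in the right neighborhood---you correctly place the result in the GMTW/Madsen--Weiss framework and correctly flag the surgery theorem as the psc input---but it diverges in attribution and in some details. You invoke Galatius--Randal-Williams parametrized surgery and Igusa's two-index theorem; the paper attributes Igusa's theorem to the proof of \emph{Frenck's} Theorem~\ref{thm:georg} (the $d\geq 6$ result), not to Bantje's, and points instead to Perlmutter's cobordism-category techniques for getting down to $d=5$. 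Your final paragraph also has the comparison backwards: the paper says that where both theorems apply, Frenck's has a \emph{stronger} conclusion, which is why cases (3)--(4) of Theorem~\ref{thm:rigidity-dimension4} carry \emph{additional} hypotheses compared to (1)--(2), not fewer.

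Beyond that, your proposal is a plausible high-level roadmap, but several of its steps (constructing $\fX$ ``by a parallel argument in the psc category'', identifying $B\Diff^\theta(M)^{(2)}$ with a full path component of the group completion) are each substantial theorems in their own right and are only gestured at here. Since the present paper treats Theorem~\ref{thm:jannes} purely as input, there is no further discrepancy to report.
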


\begin{explanations}
\begin{enumerate}
\item The left column of \eqref{eqn:jannes} is just the Borel construction 
\[
\Riem^+ (M) \hq \Diff(M)  := E \Diff(M) \times_{\Diff(M)} \Riem^+ (M)\to B \Diff(M)
\]
coming from the diffeomorphism action. 
\item $B \Diff^\theta (M)^{(2)} \subset B \Diff^\theta (M)$ is a union of path components. More precisely, write $\Bun (TM,\theta)^{(2)} \subset \Bun (TM,\theta)$ for those bundle maps whose underlying map $M \to B(d)$ is $2$-connected, and put 
\[
B \Diff^\theta (M)^{(2)}:=E \Diff(M) \times_{\Diff(M)} \Bun (TM,\theta)^{(2)}. 
\]
The lower left horizontal map of \eqref{eqn:jannes} is the restriction of \eqref{eqn:structuredBDiff}, and the left square of \eqref{eqn:jannes} is defined to be a pullback. In other words
\[
B \Diff^{\theta,\psc} (M)^{(2)} := E \Diff(M) \times_{\Diff(M)} (\Bun (TM,\theta)^{(2)} \times \Riem^+ (M)). 
\]
\item $\MT \theta(d)$ is the Madsen--Tillmann spectrum from \cite{GMTW}, in other words the Thom spectrum of the additive inverse of $V_d^\theta\to B(d)$. For any $d$-manifold $M$, we have the \emph{Madsen--Tillmann map}
\[
\alpha: B \Diff^\theta (M) \to \Omega^\infty \MT \theta(d)
\]
also (in this generality only implicitly) discussed in \cite{GMTW}. The right horizontal bottom map of \eqref{eqn:jannes} is the restriction of $\alpha$ to $B \Diff^\theta (M)^{(2)}$. 
\item The space $\fX$ is constructed in \cite{Bantje} and (of course) the main point of the theorem, and also the place where the $2$-connectivity condition enters. In \S \ref{rem:outlinejannes} below, we give a brief sketch. 
\item If either $\Riem^+ (M) =\emptyset$ or $\Bun(TM,\theta)^{(2)} = \emptyset$, the result is vacuously true (with $\fX=\emptyset$). Hence a sensible choice of $\theta$ is crucial when applying the result.
\end{enumerate}
\end{explanations}

\begin{rems}
\begin{enumerate}
\item The unstated version of Theorem \ref{thm:jannes} for manifolds with boundary supersedes \cite[Theorems E and F]{ERW22} which apply only for $d \geq 6$ and \emph{exclusively} to manifolds with nonempty boundary. The latter result implies the main result of \cite{BHSW} which might be viewed as the first instance of result in this direction. With hindsight, these theorems have been the main obstacle in the attempt to directly use the topology of $\Diff (M)$ to get information about $\Riem^+(M)$ \cite{Hitchin}, \cite{CrowSchick}, \cite{HSS}.
\item In some special cases, diagrams similar to \eqref{eqn:jannes} have been established in \cite[Diagram (1.6)]{BERW} and \cite[Theorem F]{ERW19}, as consequences of the special cases of Theorem \ref{thm:georg} proved in those papers. These have been the decisive steps in those papers.
\end{enumerate}
\end{rems}

Let us next explain how to deduce from Theorem \ref{thm:jannes} a statement that is formally similar to that of Theorem \ref{thm:georg}. A $2$-connected $\theta$-structure $\ell\in \Bun(TM,\theta)^{(2)}$ on $M$ determines $(M,\ell) \in B \Diff^\theta(M)^{(2)}$; let $B \Diff^\theta(M)^{(2)}_{(M,\ell)}$ be the path component containing that basepoint. We restrict \eqref{eqn:jannes} to obtain the diagram
\begin{equation}\label{eqn:jannes1}
\xymatrix{
\Riem^+ (M) \hq \Diff(M) \ar[d]  & B \Diff^{\theta,\psc} (M)^{(2)}_{(M,\ell)} \ar[d] \ar[r] \ar[l] & \fX_{(M,\ell)} \ar[d] \\
B \Diff (M) & B \Diff^\theta (M)^{(2)}_{(M,\ell)} \ar[l] \ar[r]^{\alpha} & \Omega^\infty_{(M,\ell)} \MT \theta(d);
}
\end{equation}
here $\Omega^\infty_{(M,\ell)} \MT \theta(d) \subset \Omega^\infty \MT \theta(d)$ is the path component containing the image of $(M,\ell)$ under $\alpha$. The upper row is obtained by taking the preimages of the distinguished path components; hence the diagram \eqref{eqn:jannes1} is still homotopy cartesian. 

The left column of \eqref{eqn:jannes1} is by definition a fibration with fibre $\Riem^+ (M)$. As such, it has a classifying map $B \Diff(M) \to B \hAut(\Riem^+ (M))$, unique up to homotopy, by the general classification theory for fibrations \cite{May}. Of course, this map just comes from delooping the action map $\Diff(M) \to \hAut(\Riem^+ (M))$ and does not carry any new information. The composition $B \Diff^\theta(M)^{(2)}_{(M,\ell)} \to B \Diff(M)\to B \hAut(\Riem^+ (M))$ is then, tautologically, a classifying map for the middle fibration in \eqref{eqn:jannes1}. The fact that the right square in \eqref{eqn:jannes1} is homotopy cartesian implies that the classfying map for the middle fibration factors, up to homotopy, through $\Omega^\infty_{(M,\ell)}\MT \theta(d)$. Therefore we establish from \eqref{eqn:jannes1} a homotopy commutative diagram 
\begin{equation}\label{thm:jannesgeorg}
\xymatrix{
B \Diff^\theta (M)^{(2)}_{(M,\ell)} \ar[d] \ar[r]^{\alpha} & \Omega^\infty_{(M,\ell)} \MT \theta(d) \ar[d]^{\varphi}\\ 
B \Diff(M) \ar[r] & B \hAut(\Riem^+ (M)).
}
\end{equation}
Taking fundamental groups at $(M,\ell)\in B \Diff^\theta(M)^{(2)}$ and its image points under the maps in \eqref{thm:jannesgeorg} gives a commutative diagram\footnote{The Madsen--Tillmann map $\alpha:B \Diff^\theta(M) \to \Omega^\infty \MT \theta(d)$ sends $(M,\ell)$ to a $\alpha(M,\ell) \in \Omega^\infty \MT \theta(d)$, which is the basepoint for the fundamental group in the upper right corner.} 
\begin{equation}\label{thm:jannesgeorggroups}
\xymatrix{
\Gamma^\theta (M,\ell) \ar[d] \ar[r]^-{\alpha_*} & \pi_1 (\Omega^\infty \MT \theta(d),\alpha(M,\ell)) \ar[d]^{\varphi}\\ 
\Gamma (M)\ar[r] & \pi_0 (\hAut(\Riem^+ (M)))
}
\end{equation}
of groups. Since $\Omega^\infty \MT \theta(d)$ is an infinite loop space, the fundamental group at $\alpha(M,\ell)$ is canonically identified with the fundamental group at the basepoint which in turn is identified with the homotopy group $\pi_1 (\MT \theta(d))$ of the \emph{spectrum} $\MT \theta(d)$ (keeping track of this identification will require some care further below).

\subsection{A very brief overview of the proof of Bantje's theorem}\label{rem:outlinejannes}

Let us mention that the proof of Theorem \ref{thm:jannes} relies on ideas of Perlmutter \cite{Perlmutter1}, \cite{Perlmutter2} which in turn are an elaboration of large parts of the original proof of the Mumford conjecture by Madsen and Weiss \cite{MadsenWeiss} in high dimensions. Given the well-known complexity of \cite{MadsenWeiss}, the reader should not expect an easy ride. 

One first introduces a certain spaces of manifolds $\cW_{\theta,T}^{[3]}$ and $\cW_{\theta,T}^{[3],\psc}$. Here $T$ is a finite set, equipped with some extra data. 
The points in $\cW_{\theta,T}^{[3]}$ are $d$-dimensional closed manifolds with $\theta$-structures and a collection of surgery data of index in $\{3,\ldots,d-2\}$, indexed by the elements of $T$. The $\theta$-structures are required to be $2=3-1$-connected. The points of $\cW_{\theta,T}^{[3],\psc}$ are the points of $\cW_{\theta,T}^{[3]}$, but with psc metrics as additional datum; the psc metrics need to be of standard form on the surgery data. 

Almost by definition, the forgetful map $\cW_{\theta,\emptyset}^{[3],\psc} \to \cW_{\theta,\emptyset}^{[3]}$ is the disjoint union of the middle maps in \eqref{eqn:jannes}, over all closed $M$. 

As $T$ ranges through some indexing category $\cK^{3}$ of finite sets with extra data, the space $\cW_{\theta,T}^{[3]}$ and its psc variant depend functorially on $T$ (by either performing surgeries along the surgery data or by dropping the surgery data), and we can form the homotopy colimit over $\cK^{3}$. Together with the natural inclusion maps to the homotopy colimit, one obtains a commutative diagram
\begin{equation}\label{eqn:jannesproofdiagram}
\xymatrix{
B \Diff^{\theta,\psc} (M)^{(2)} \ar[r]\ar[d] & \cW_{\theta,\emptyset}^{[3],\psc} \ar[r] \ar[d] & \hocolim_{T \in \cK^3} \cW_{\theta,T}^{[3],\psc} \ar[d] \\
B \Diff^{\theta} (M)^{(2)} \ar[r]& \cW_{\theta,\emptyset}^{[3]} \ar[r] & \hocolim_{T \in \cK^3} \cW_{\theta,T}^{[3]} . \\
}
\end{equation}
The left square is homotopy cartesian almost by definition. The right square is homotopy cartesian as well: this uses a well-known general criterion for proving cartesianness of such squares; the hypothesis of that criterion holds in the case at hand by the Gromov--Lawson--Chernysh theorem. 

The crucial point in the proof is to show that the bottom composition in \eqref{eqn:jannesproofdiagram} can be factored as 
\begin{equation}\label{eqn:composition-jannes-proof}
B \Diff^{\theta} (M)^{(2)} \to \Omega^\infty \MT \theta(d) \to  \hocolim_{T \in \cK^3} \cW_{\theta,T}^{[3]}.
\end{equation}
Once this is done, we define $\fX$ as the homotopy pullback of the right column of \eqref{eqn:jannesproofdiagram} along the second map of \eqref{eqn:composition-jannes-proof}, and Theorem \ref{thm:jannes} follows. The construction of the factorization \eqref{eqn:composition-jannes-proof} involves an elaborate parametrized surgery argument, generalizing the more mysterious portions of \cite{MadsenWeiss} to high dimensions, and we won't attempt an informative synopsis here.

\subsection{First part of the proof of Theorem \ref{thm:rigidity-dimension4}}

\begin{prop}\label{prop:proofmainthm:generaloutline}
Let $M^d$ be a closed manifold with a $2$-connected $\theta$-structure $\ell:M \to B$, and let $[f] \in \Gamma^\theta(M,\ell)$ be given. Let $N^n$ be a further closed manifold. Then the map $(f \times \id_N)^*: \Riem^+ (M \times N) \to \Riem^* (M \times N)$ is homotopic to the identity, under each of the following hypotheses.
\begin{enumerate}
\item $d+n \geq 6$ and the cobordism class of the mapping torus $[T(f)] \in \Omega_{d+1}^\theta$ vanishes.
\item $d+n \geq 5$, and the element $\alpha_*(f) \in \pi_1 (\Omega^\infty \MT \theta(d),\alpha(M,\ell))$ vanishes.
\end{enumerate}
\end{prop}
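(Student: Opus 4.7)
\emph{Proof plan.} My approach is to apply Theorem \ref{thm:georg} (for (1)) or Theorem \ref{thm:jannes} (for (2)) directly to the product manifold $M \times N$, equipped with a suitable tangential structure. To this end, I would first equip $N$ with a $2$-connected $\theta_N$-structure $\ell_N \colon N \to B_N$, where $\theta_N \colon B_N \to BO$ is the normal $2$-type of $N$. Whitney summing yields a tangential structure $\theta \times \theta_N$ on $M \times N$, and the resulting structure map $\ell \times \ell_N \colon M \times N \to B \times B_N$ is $2$-connected (since $\pi_i(X \times Y) = \pi_i X \times \pi_i Y$). The given class $[f] \in \Gamma^\theta(M,\ell)$ then lifts canonically to a class $[f \times \id_N] \in \Gamma^{\theta \times \theta_N}(M \times N, \ell \times \ell_N)$ by pairing with the constant loop at $(N, \ell_N)$ in $B \Diff^{\theta_N}(N)$.

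For part (1), the mapping torus $T(f \times \id_N) \cong T(f) \times N$ inherits the product $(\theta \times \theta_N)$-structure. Any $\theta$-nullcobordism $W$ of $T(f)$ yields a $(\theta \times \theta_N)$-nullcobordism $W \times N$ of $T(f) \times N$, so $[T(f \times \id_N)] = 0$ in $\Omega_{d+n+1}^{\theta \times \theta_N}$. Since $d + n \geq 6$, Theorem \ref{thm:georg} applied to $M \times N$ then forces $(f \times \id_N)^*$ to be trivial in $\pi_0(\hAut(\Riem^+(M \times N)))$, i.e.\ homotopic to the identity.

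For part (2), the same multiplicative principle is to be implemented at the spectrum level. There is a natural pairing of Madsen--Tillmann spectra $\MT\theta(d) \wedge \MT\theta_N(n) \to \MT(\theta \times \theta_N)(d+n)$, coming from the fact that $V_{d+n}^{\theta \times \theta_N}$ pulls back to $V_d^\theta \oplus V_n^{\theta_N}$ over $B(d) \times B_N(n)$. Combined with naturality of the Madsen--Tillmann map $\alpha$ under exterior products of bundles of manifolds, this identifies $\alpha_*(f \times \id_N) \in \pi_1(\MT(\theta \times \theta_N)(d+n))$ with the external product $\alpha_*(f) \cdot [N]$, where $[N] \in \pi_0(\MT\theta_N(n)) = \Omega_n^{\theta_N}$. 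Hence $\alpha_*(f) = 0$ implies $\alpha_*(f \times \id_N) = 0$, and since $d + n \geq 5$, the diagram \eqref{thm:jannesgeorggroups} applied to $M \times N$ gives the conclusion.

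The main technical point will be justifying the multiplicativity statement in part (2). I would rigorize it either via the spectrum-level construction of the pairing following \cite{GMTW}, or alternatively via a direct bordism-theoretic argument: using the GMTW-style description of $\pi_1(\MT\theta(d))$ as a bordism group of codimension-$1$ tangentially structured fibrations over $S^1$, any explicit nullbordism witnessing $\alpha_*(f) = 0$ can be crossed with $(N, \ell_N)$ to produce a nullbordism witnessing $\alpha_*(f \times \id_N) = 0$, bypassing the need to invoke spectrum-level multiplicativity directly.
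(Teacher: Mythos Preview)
Your proof is correct and follows the same strategy as the paper: equip $N$ with a $2$-connected tangential structure, observe that the relevant invariant of $f \times \id_N$ is obtained from that of $f$ by pairing with $N$, and apply Frenck's resp.\ Bantje's theorem to $M \times N$. The paper makes the marginally simpler choice $B_N = N$ with $\vartheta$ the Gauss map and $\ell_N = \id_N$ (trivially $2$-connected), and for (2) argues directly with the homotopy-commutative square \eqref{eqn:priductdiagramMTmaps} rather than formulating an explicit product in homotopy groups.

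One small slip: the claimed identification $\pi_0(\MT\theta_N(n)) = \Omega_n^{\theta_N}$ is not correct in general (see the exact sequence \eqref{eqn:pinullmtspectra}), but this is inessential to your argument---all you use is that pairing the nullhomotopic loop $\alpha_*(f)$ with the constant loop at $\alpha(N,\ell_N)$ yields a nullhomotopic loop, which is exactly how the paper phrases it.
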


\begin{proof}
(1) Given a further map $\vartheta:C \to BO$, we let 
\[
\theta \oplus \vartheta : B \times C \stackrel{\theta \times \vartheta}{\to} BO \times BO \stackrel{\mu}{\to} BO
\]
where $\mu$ is the direct sum map. There is a bilinear map 
\[
\_ \times \_ : \Omega_{d}^{\theta} \times \Omega_{n}^{\vartheta} \to \Omega_{d+n}^{\theta\oplus  \vartheta}
\]
taking products of representatives. If $N^n$ comes equipped with a $\vartheta$-structure, then $[f \times \id_N] \in \Gamma (M \times N)$ can be lifted to $\Gamma^{\theta\oplus \vartheta} (M \times N)$, and 
\[
[T(f \times \id_N)] = [T(f)] \times [N]  \in \Omega_{d+n}^{\theta \oplus \vartheta}.
\]
Since $[T(f)]=0$, we get $[T(f \times \id_N)]=0$. 

Theorem \ref{thm:georg} gives us the desired conclusion, provided that we can choose $C$ so that the product tangential structure $M \times N \to B \times C$ is $2$-connected. The original structure $\ell: M \to B$ is $2$-connected by assumption, and one possibility for $C$ is just the Gauss map $\vartheta:N \to BO$ of $TN$. Then $m= \id: N \to C=N$ is a $\vartheta$-structure which is clearly $2$-connected.

(2) is almost identical to (1). We let $N \stackrel{m}{\to} C \stackrel{\vartheta}{\to} BO$ as just constructed. There is an external product structure 
\begin{equation}\label{pairing-MTspectra}
\MT \theta(d) \wedge \MT \vartheta (n) \to \MT (\theta\oplus\vartheta) (d+n)
\end{equation}
that we use as follows. The right square of the diagram 
\begin{equation}\label{eqn:priductdiagramMTmaps}
\xymatrix{
S^1 \ar[r]^-{(f,N)} \ar@{=}[d] & B \Diff^\theta(M) \times B \Diff^\vartheta (N) \ar[d] \ar[r]^{\alpha_M \times \alpha_N} & \Omega^\infty \MT \theta (d) \times \Omega^\infty MT \vartheta (n) \ar[d]\\
S^1  \ar[r]^-{f\times \id_N} & B \Diff^{\theta \oplus \vartheta} (M \times N) \ar[r]^{\alpha_{M \times N}} & \Omega^\infty \MT (\theta\oplus\vartheta) (d+n)
}
\end{equation}
commutes up to homotopy; the middle vertical map comes from taking direct products of bundles, the right vertical map is induced from \eqref{pairing-MTspectra} and the two horizontal maps are the Madsen--Tillmann maps.
The two left horizontal maps denoted $f$ and $f \times \id_N$ are the classifying maps for the mapping tori of $f$ and $f \times \id_N$, and $N: S^1 \to B \Diff^\vartheta (N)$ is the constant map to the basepoint. The hypothesis of the proposition implies that the top composition in \eqref{eqn:priductdiagramMTmaps} is homotopic to a constant map; hence so is the bottom composition. In light of \eqref{thm:jannesgeorggroups} (which is applicable as $\dim (M \times N) \geq 5$), this concludes the proof.
\end{proof}

\begin{proof}[Proof of Theorem \ref{thm:rigidity-dimension4} for $n\geq 2$]
Depending on whether $M$ admits a spin structure or not, let $\theta= \Spin$ or $\theta=\SO$. In both cases, we obtain $\theta$-structures $\ell: M \to B$ which are $2$-connected. Also in both cases, the class of $[f] \in \Gamma(M)$ admits a lift to $\Gamma^\theta(M)$ (use Remark \ref{rem:image-forgetmcg}), which we also denote $[f]$. In both cases, we have 
\[
[T(f)] =0 \in \Omega_5^\theta;
\]
(if $\theta=\SO$, this is assumed, if $\theta=\Spin$, just use $\Omega_5^\Spin=0$ \cite{MilnorSpin}).
Then apply Proposition \ref{prop:proofmainthm:generaloutline} (1).
\end{proof}

\section{The bordism computation}\label{sec:calculation}

To complete the proof of Theorem \ref{thm:rigidity-dimension4} in the case $n=1$ , it remains to show that the conditions of (3) and (4)  imply that $\alpha_*(f) =0 \in \pi_1 (\Omega^\infty \MT \theta(4),\alpha(M,\ell))$ in both cases. This requires a calculation of $\pi_1 (\MT \SO(4))$ and $\pi_1 (\MT \Spin (4))$.
Let 
\begin{equation}\label{eqn:dentildealpha}
\tilde{\alpha}: \Gamma^\theta (M^4,\ell) \stackrel{\alpha_*}{\to} \pi_1 (\Omega^\infty \MT \theta(4),\alpha(M,\ell)) \cong \pi_1 (\MT \theta (4))
\end{equation}
be the composition of the induced map $\alpha_*$ appearing in \eqref{thm:jannesgeorggroups} with the canonical isomorphism that comes from using the $h$-space structure and the identification of the fundamental group at the basepoint of an infinite loop space with the first homotopy group of the spectrum.

The way we stated Theorem \ref{thm:rigidity-dimension4} involves the Lie group $\Aut (I_M)$, so let us have a closer look at that. Let more generally $(V,b)$ be a finite-dimensional $\bR$-vector space with a nondegenerate symmetric bilinear form. 
By Sylvester's law of inertia, there is a basis of $V$ with respect to which $b$ is given by the matrix $\twomatrix{1_p}{}{}{-1_q}$; so $\Aut(b) \cong O(p,q)$. Any element $g \in O(p,q)$ can be written as 
\[
\twomatrix{g_+}{g_{01}}{g_{10}}{g_-}
\]
and the elements $g_+$, $g_-$ are necessarily invertible. Therefore, the functions $\delta_\pm: g \mapsto \sign (\det (g_\pm))$ are locally constant. We get continuous functions $\delta_\pm: \Aut(b) \to \{\pm 1\}= \bZ/2$, and by restricting to the maximal compact subgroup which is isomorphic to $O(p) \times O(q)$, one proves that these are group homomorphisms, and that $\delta_+(g) \delta_-(g)=\det (g)$. It is well-known that 
\[
(\det,\delta_+):\pi_0 (\Aut(b)) \to \bZ/2^2
\]
is injective, and surjective if $b$ is not definite. 

With these prelimaries in place, we can now state the result of this section, which by Proposition \ref{prop:proofmainthm:generaloutline} implies Theorem \ref{thm:rigidity-dimension4} in the case $n=1$.

\begin{prop}\label{prop:bordismcalcul}
There is an isomorphism 
\[
\varphi=(\varphi_1,\varphi_2,\varphi_3): \pi_1 (\MT \SO(4)) \to \bZ/2^3
\]
such that when $M^4$ is $1$-connected and $f \in \Diff^+ (M)$, we have
\begin{equation}\label{eqn:formula-invariant}
\varphi (\tilde{\alpha} (f)) = (w_2 w_3[T(f)],\det(H^2(f;\bR)),\delta_+ (H^2 (f;\bR))).
\end{equation}
The map $\pi_1 (\MT \Spin (4)) \to \pi_1(\MT \SO(4))$ is injective, with image $\ker (\varphi_1)$. 
\end{prop}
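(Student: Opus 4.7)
The proof splits naturally into two parts: determining the abstract group $\pi_1(\MT\SO(4))$ (and its spin analogue), and identifying the three projections $\varphi_i$ with the geometric invariants on the right-hand side of \eqref{eqn:formula-invariant}.

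For the group computation, my tool of choice is the Atiyah--Hirzebruch spectral sequence
\[
E_2^{p,q} = H_{p+4}(BSO(4);\pi_q^s) \;\Rightarrow\; \pi_{p+q}(\MT\SO(4))
\]
obtained via the Thom isomorphism for $-V_4 \to BSO(4)$. Only finitely many cells contribute to total degree one, namely (from highest to lowest filtration) $H_5(BSO(4);\bZ)$, $H_4(BSO(4);\bF_2)$, $H_3(BSO(4);\bF_2)$, and $H_2(BSO(4);\bZ/24)$, tensored respectively with $\pi^s_0,\pi^s_1,\pi^s_2,\pi^s_3$. Using $H^*(BSO(4);\bF_2) = \bF_2[w_2,w_3,w_4]$ together with Bocksteins and universal coefficients, these $E_2$-terms are explicitly computable and the $d_2$-differentials identifiable as Steenrod operations. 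The spin analogue is much easier: $B\Spin(4) = BSU(2) \times BSU(2)$ is $3$-connected with $H^*(\cdot;\bZ) = \bZ[c_2^{(1)}, c_2^{(2)}]$, so in the AHSS for $\pi_1(\MT\Spin(4))$ only $E_2^{0,1} = H_4(B\Spin(4);\bF_2) = \bF_2^2$ contributes and all adjacent cells vanish, giving $\pi_1(\MT\Spin(4)) \cong (\bZ/2)^2$ without further work. Alternatively, the Madsen--Tillmann cofibre sequence $\Sigma^{-1}\MT\SO(3) \to \MT\SO(4) \to \Sigma^\infty_+ BSO(4)$ (coming from the sphere bundle $S(V_4) \simeq BSO(3) \to BSO(4)$), together with a parallel computation of $\pi_*(\MT\SO(3))$, provides a second route.

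The identification of $\varphi_1$ is direct: pair $\tilde\alpha(f)$ with $U \cdot w_2w_3 \in H^1(\MT\SO(4);\bF_2)$, where $U$ is the mod-$2$ Thom class. The Pontryagin--Thom interpretation of $\tilde\alpha(f)$ as the bordism class of the mapping torus carrying its vertical tangent bundle converts this pairing into $\langle w_2w_3(T_vT(f)), [T(f)]\rangle = w_2w_3[T(f)]$, using $TT(f) \cong T_vT(f) \oplus \bR$.

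The identification of $\varphi_2$ and $\varphi_3$ will be the main obstacle. Since $H^1(\MT\SO(4);\bF_2)$ is one-dimensional, only $\varphi_1$ is detected by a primary mod-$2$ cohomology class; $\varphi_2$ and $\varphi_3$ must instead be extracted from deeper layers of the AHSS filtration, corresponding to $\eta$- and $\eta^2$-multiplication images. Geometrically, the intersection form of a $1$-connected $M^4$ decomposes as $H^2(M;\bR) = H^+ \oplus H^-$ with respect to any Riemannian metric, and for the mapping torus $\pi:T(f) \to S^1$ fibrewise Hodge theory produces flat rank-$b_2^\pm$ bundles $\Harm^\pm \to S^1$ whose first Stiefel--Whitney classes are precisely $\delta_\pm(H^2(f;\bR))$; consequently $\det(H^2(f;\bR)) = w_1(\Harm^+ \oplus \Harm^-)$. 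The plan is to express these $w_1$'s as invariants of the data naturally encoded by $\tilde\alpha(f) \in \pi_1(\MT\SO(4))$ --- i.e., of the oriented rank-$4$ bundle $T_vT(f) \to T(f)$ together with its projection to $S^1$ --- via a family-index-style construction involving the canonical decomposition $\Lambda^2 V_4 = \Lambda^2_+V_4 \oplus \Lambda^2_-V_4$, then verify on a set of AHSS generators that the resulting homomorphisms combine with $\varphi_1$ into an isomorphism onto $(\bZ/2)^3$. Finally, naturality of the AHSS forces the image of $\pi_1(\MT\Spin(4))$ in $\pi_1(\MT\SO(4))$ to land in $\ker\varphi_1$ (since $w_2 = 0 = w_3$ on $B\Spin(4)$), and the order equality $|(\bZ/2)^2| = |\ker\varphi_1|$ then yields both injectivity and surjectivity onto $\ker\varphi_1$.
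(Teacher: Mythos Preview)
Your overall strategy is sound and parallels the paper's in spirit, but the execution differs in two places worth noting.

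For the abstract group, the paper does \emph{not} run the Atiyah--Hirzebruch spectral sequence to completion. Instead it uses the Madsen--Tillmann cofibre sequences (your ``alternative route'') only to bound the orders from above: $|\pi_1(\MT\SO(4))|\le 8$ and $|\pi_1(\MT\Spin(4))|\le 4$. The isomorphism is then forced by exhibiting three explicit diffeomorphisms (the factor-swap and the double reflection on $S^2\times S^2$, and complex conjugation on $\cp^2$) whose images under $\varphi$ generate $(\bZ/2)^3$. This sidesteps any differential or extension analysis in the AHSS, which you leave unfinished for $\MT\SO(4)$; your spin computation via $B\Spin(4)\simeq BSU(2)^2$ is clean, but the oriented case would need real work to pin down.

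For $\varphi_2$ and $\varphi_3$, the paper makes your ``family-index-style construction'' completely precise: it builds two spectrum maps $\chi,\sigma:\MT\SO(4)\to\KO$ as universal symbol classes of the de~Rham operator $D=d+d^*$ and the self-duality operator $D^+=d+(d^+)^*$ (using $\Lambda^2=\Lambda^2_+\oplus\Lambda^2_-$, exactly as you anticipate), then applies $\pi_1(-)$ to land in $\pi_1(\KO)=\bZ/2$. This is the missing step in your outline: you must produce honest homomorphisms out of $\pi_1(\MT\SO(4))$, not merely invariants of mapping tori, and the spectrum maps to $\KO$ are what accomplish this. Once $\chi$ and $\sigma$ are in hand, the formula \eqref{eqn:formula-invariant} drops out from identifying the index bundles over $S^1$ with the Hodge bundles $\Harm^\pm$, and the three explicit examples simultaneously prove surjectivity of $\varphi$ and verify the formula. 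Your plan to check on AHSS generators would instead require translating abstract generators back into geometric representatives, which is more circuitous.
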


We remark that $\pi_4 (\MT \Spin (4)) \cong \bZ/2^2$ was shown in \cite{GRWAb} (combine the sequence (5.2), Lemma 5.2 and 5.5 of loc.cit. with the fact that $\Omega_5^\Spin =0$). Since that paper does not cover $\pi_1 (\MT \SO(4))$, we give an alternative proof from scratch.

The proof is a bit convoluted and we summarize the main steps beforehand.  
\begin{enumerate}
\item In \S \eqref{subsec:kertvaire}, we recall the Kervaire semicharacteristic and prove \eqref{eqnw2w3mappingtorus}.
\item In \S \eqref{subsection:estimate}, we use basic facts about low-dimensional homotopy groups of Madsen--Tillmann spectra. These could be used to prove that $\pi_1 (\MT \SO(4))\cong \bZ/2^3$ directly. For us it is however only necessary to estimate the order of the groups as in Lemma \ref{lem:grouporder}.
\item We conclude the argument in \S \ref{subsec:conclusionargument} by constructing $\varphi$ with the claimed properties. Explicit examples prove that $\varphi$ is surjective, and hence injective by (2). The claim about $\pi_1 (\MT \Spin (4))$ will then also follow immediately.
\end{enumerate}

\subsection{The Kervaire semicharacteristic}\label{subsec:kertvaire}

\begin{defn}
Let $M$ be a closed manifold of dimension\footnote{We are interested in the case $m=1$.} $4m+1$ and let $\bF$ be a field. The \emph{Kervaire semicharacteristic} $\Kerv{\bF}(M) \in \bZ/2$ of $M$ over $\bF$ is 
\[
\Kerv{\bF} (X):= \sum_{k=0}^{2m} \dim (H^k (M;\bF)) \pmod 2\in \bZ/2. 
\]
\end{defn}
Clearly $\Kerv{\bF}(M)$ only depends on the charateristic of $\bF$. 
It is a theorem of Lusztig, Milnor and Peterson \cite{LusMilPet} that 
\begin{equation}\label{eqn:milluspet}
\Kerv{\bR} (M)- \Kerv{\bF_2}(M) = \scpr{w_2 (TM) w_{4m-1} (TM),[M]}
\end{equation}
for closed oriented $M^{4m+1}$. Therefore, 
\begin{equation}\label{eqn:milluspet2}
\Kerv{\bR} - \Kerv{\bF_2}: \Omega^\SO_5 \to \bZ/2
\end{equation}
is well-defined (note that neither of the summands by itself is bordism invariant). Moreover, \eqref{eqn:milluspet2} is an isomorphism. The Kervaire semicharacteristic of a mapping torus $T(f)$ is easily computed in terms of the action of $f$ on cohomology, as follows. 

\begin{lem}\label{lem:kervaire-mappingtorus}
Let $f:M \to M$ be a diffeomorphism of a closed $4m$-manifold. Then 
\[
\Kerv{\bF}(T(f))= \dim (\Eig_{1} (H^{2m} (f;\bF))) \pmod 2\in  \bZ/2. 
\]
\end{lem}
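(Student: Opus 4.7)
The proof is essentially a bookkeeping exercise with the Wang sequence of the fibration $M \hookrightarrow T(f) \to S^1$. First, I would recall that this fibration gives rise to a long exact sequence
\begin{equation*}
\cdots \to H^{k-1}(M;\bF) \xrightarrow{f^* - \id} H^{k-1}(M;\bF) \to H^k(T(f);\bF) \to H^k(M;\bF) \xrightarrow{f^* - \id} H^k(M;\bF) \to \cdots
\end{equation*}
(which one can derive either from the Leray--Serre spectral sequence, which collapses on the $E_2$-page for a base $S^1$, or directly from the Mayer--Vietoris sequence for the standard open cover of $T(f)$).

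From this sequence, for every $k$,
\begin{equation*}
\dim_\bF H^k(T(f);\bF) = \dim_\bF \ker(f^* - \id\mid H^k(M;\bF)) + \dim_\bF \coker(f^* - \id\mid H^{k-1}(M;\bF)).
\end{equation*}
Both $\ker$ and $\coker$ of an endomorphism $A$ of a finite-dimensional vector space have the same dimension by the rank--nullity theorem applied twice, so both contributions equal $\dim_\bF \Eig_1(H^\ast(f;\bF))$ in the appropriate degree. Thus
\begin{equation*}
\dim_\bF H^k(T(f);\bF) = \dim_\bF \Eig_1(H^k(f;\bF)) + \dim_\bF \Eig_1(H^{k-1}(f;\bF)).
\end{equation*}

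Summing this identity over $k = 0, 1, \ldots, 2m$ and reducing modulo $2$, each term $\dim_\bF \Eig_1(H^k(f;\bF))$ with $0 \leq k \leq 2m-1$ appears twice and cancels; the contribution at $k=-1$ is zero since $H^{-1}(M;\bF)=0$, and the term at $k=2m$ appears only once. Therefore
\begin{equation*}
\Kerv{\bF}(T(f)) = \sum_{k=0}^{2m} \dim_\bF H^k(T(f);\bF) \equiv \dim_\bF \Eig_1(H^{2m}(f;\bF)) \pmod 2,
\end{equation*}
which is the claim. There is no real obstacle here; the only thing worth double-checking is the signs/indexing in the Wang sequence and the fact that the telescoping sum leaves only the middle degree, so I would present the argument in exactly the two displayed identities above.
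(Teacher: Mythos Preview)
Your proof is correct and essentially identical to the paper's: the Wang sequence is just the long exact sequence coming from the collapsing Leray--Serre spectral sequence over $S^1$, and your rank--nullity observation $\dim\ker(f^*-\id)=\dim\coker(f^*-\id)$ is the same as the paper's use of $\chi(S^1;H^q(M;\bF))=0$. The telescoping sum in your argument corresponds exactly to the paper's pairing of the $E_2^{0,k}$ and $E_2^{1,k}$ terms for $k<2m$, leaving only $E_2^{0,2m}=\Eig_1(H^{2m}(f;\bF))$.
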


Together with \eqref{eqn:milluspet}, Lemma \ref{lem:kervaire-mappingtorus} implies the formula
\eqref{eqnw2w3mappingtorus}. 

\begin{proof}
The Leray--Serre spectral sequence for $T(f) \to S^1$
\[
E^{p,q}_2 = H^p (S^1;H^q (M;\bF)) \Rightarrow H^{p+q}(T(f);\bF)
\]
collapses for degree reasons. Here $H^p (S^1;H^q (M;\bF))$ denotes the cohomology twisted by the action of $\pi_1 (S^1)=\bZ$ induced by $H^q (f;\bF)$. 
Therefore
\begin{equation}\label{eqn:kervaire-mappngtorus}
\Kerv{\bF} (T(f)) =  \dim (E^{0,0}_2) + \sum_{k=1}^{2m} (\dim (E^{0,k}_2) + \dim (E^{1,k-1}_2) )\pmod 2= 
\end{equation}
\[
= \sum_{k=0}^{2m-1} (\dim (E^{0,k}_2) + \dim (E^{1,k}_2)) + \dim (E^{0,2m}_2)\pmod 2\in \bZ/2.
\]
For each $q$, we have 
\[
\dim (E^{0,q}_2) - \dim (E^{1,q}_2) = \chi(S^1;H^q (M;\bF)) =0\in \bZ
\]
by the well-known formula $\chi(X;F)= \chi(X) \dim (F)$ for the Euler number of a finite complex $X$, twisted by a coefficient system of finite-dimensional vector spaces. Hence \eqref{eqn:kervaire-mappngtorus} becomes
\[
\Kerv{\bF} (T(f))= \dim (E^{0,2m}_2)\pmod 2\in \bZ/2;
\]
and 
\[
E^{0,2m}_2= H^0 (S^1;H^{2m}(M;\bF)) = H^{2m}(M;\bF)^\bZ = \Eig _1(H^{2m}(f;\bF)).
\]
\end{proof}

\begin{lem}\label{lem:gantmacher}
Let $M^{4m}$ be closed and oriented, and let $f \in \Diff^+ (M)$. Then 
\[
(-1)^{\Kerv{\bR} (T(f))} (-1)^{\Kerv{\bR} (S^1 \times M)} = \det (H^{2m} (f;\bR) \in \pm 1. 
\]
\end{lem}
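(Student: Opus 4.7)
The plan is to use Lemma \ref{lem:kervaire-mappingtorus} to reduce the identity to a purely linear-algebraic statement about the orthogonal transformation $A := H^{2m}(f;\bR)$, and then to verify that statement by analyzing the generalized eigenspace decomposition.

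First, applying Lemma \ref{lem:kervaire-mappingtorus} to both $T(f)$ and $S^1 \times M = T(\id)$ gives $\Kerv{\bR}(T(f)) \equiv \dim \Eig_1(A) \pmod 2$ and $\Kerv{\bR}(S^1 \times M) \equiv \dim V \pmod 2$, where $V := H^{2m}(M;\bR)$. The claim then reduces to
\[
(-1)^{\dim V - \dim \Eig_1(A)} = \det A.
\]
Since $f$ is orientation-preserving, Poincar\'e duality makes $A$ preserve the intersection form $b := I_M$ on $V$, so $A \in \Aut(I_M)$ and in particular $\det A = \pm 1$.

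Next, I would exploit the generalized eigenspace decomposition $V_\bC = \bigoplus_\lambda V_\lambda^{gen}$. The relation $A^T J A = J$ forces the form to pair $V_\lambda^{gen}$ perfectly with $V_{1/\lambda}^{gen}$, so $\dim V_\lambda^{gen} = \dim V_{1/\lambda}^{gen}$, and $V_1^{gen}$ together with $V_{-1}^{gen}$ is orthogonal to the sum of the remaining eigenspaces. Consequently the contribution of every unordered pair $\{\lambda, 1/\lambda\}$ with $\lambda \neq \pm 1$ to $\det A$ cancels, leaving
\[
\det A = (-1)^{\dim V_{-1}^{gen}}.
\]
Combining the pairing with the reality constraint $\dim V_\lambda^{gen} = \dim V_{\bar\lambda}^{gen}$ shows further that $\dim V \equiv \dim V_1^{gen} + \dim V_{-1}^{gen} \pmod 2$.

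The main obstacle is to handle the unipotent part on $V_1^{gen}$: one must verify that $\dim V_1^{gen} \equiv \dim \Eig_1(A) \pmod 2$, equivalently that $\rank (A - I)|_{V_1^{gen}}$ is even. The key observation is that since $A|_{V_1^{gen}}$ is unipotent, its logarithm $L := \log A|_{V_1^{gen}} = (A-I) - (A-I)^2/2 + \ldots$ is a well-defined nilpotent endomorphism; writing $L = (A-I)\cdot h(A-I)$ with $h$ a power series satisfying $h(0) = 1$ shows $L$ has the same rank as $A - I$. Being the logarithm of an orthogonal element, $L$ lies in the Lie algebra $\mathfrak{o}(b|_{V_1^{gen}})$, i.e.\ it is skew-adjoint with respect to $b$; hence the bilinear form $(v,w) \mapsto b(Lv,w)$ is skew-symmetric on the nondegenerate space $V_1^{gen}$. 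Any skew-symmetric form has even rank, so $\rank L$ is even. Assembling the parities yields $\dim V - \dim \Eig_1(A) \equiv \dim V_{-1}^{gen} \pmod 2$, which together with the formula for $\det A$ completes the proof.
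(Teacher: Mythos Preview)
Your argument is correct. Both your approach and the paper's reduce, via Lemma \ref{lem:kervaire-mappingtorus}, to a purely linear-algebraic fact about $A = H^{2m}(f;\bR) \in \Aut(I_M)$, and both exploit that the generalized eigenspaces pair off as $V_\lambda^{gen} \leftrightarrow V_{1/\lambda}^{gen}$ under $b$. The difference lies in how the eigenvalue $1$ is handled. The paper invokes Gantmacher's classification of Jordan forms in $O(n,\bC)$, whose content here is that Jordan blocks of \emph{even} size for the eigenvalue $1$ occur in pairs; a short block count then gives $\dim V_1^{gen} \equiv \dim \Eig_1(A) \pmod 2$. You obtain the same parity directly and self-containedly: the nilpotent logarithm $L=\log(A|_{V_1^{gen}})$ has the same rank as $(A-I)|_{V_1^{gen}}$, is skew for $b$, and hence $b(L\cdot,\cdot)$ is alternating with radical $\ker L$, forcing $\rank L$ to be even. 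Your route avoids the external reference and isolates exactly the fact needed, at the cost of the small extra verification that $V_1^{gen}$ is defined over $\bR$ and that $b$ restricts nondegenerately to it (both of which you implicitly use and which follow from the eigenspace pairing you already stated). The paper's route is shorter to write but relies on a structural theorem that contains far more than is required here.
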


\begin{proof}
This is linear algebra: write $V:= H^{2m}(M;\bR)$, $b:= I_M$ and $g:= H^{2m}(f;\bR) \in \Aut(V,b)$. Pass to the complexification $V_\bC$ of $V$ and pick an isomorphism $V_\bC \cong \bC^n$ which transforms $b_\bC$ to the standard bilinear form $(x,y) \mapsto x^\top y$; then $g_\bC$ is represented by a matrix $A \in O(n,\bC)$. Theorem 9 of \cite[\S XI.5]{Gantmacher} describes the possible Jordan normal forms of $A\in O(n,\bC)$, and from the description given there, the claimed formula follows by an easy case-by-case check. 
\end{proof}

\subsection{Low--dimensional homotopy groups of Madsen--Tillmann spectra}\label{subsection:estimate}

Since $\MT \theta(d)$ is a Thom spectrum, its homotopy groups can be described in terms of cobordism groups. The following is a straightforward consequence of the classical Pontrjagin--Thom theorem as described in full generality in e.g. \cite[Chapter IV \S 7]{Rudyak}, \cite[\S II]{Stong}. 

\begin{lem}\label{lem:homotopygroupMTSpectrum}
For $k \in \bZ$, the group $\pi_k (\MT \theta(d))$ is the cobordism group of triples $[M^{d+k},\ell,\varphi]$, where $M^{d+k}$ is a closed manifold, $\ell:M \to B(d)$ is a map and $\varphi: TM \to \ell^* V_d^\theta \oplus \bR^k$ is a \emph{stable} bundle isomorphism\footnote{More abstractly, $\ell$ and $\varphi$ together are a point in $\colim_{n\to \infty} \Bun (TM\oplus \bR^n,V_d^\theta \oplus \bR^{k+n})$.}.
\end{lem}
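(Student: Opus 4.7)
The plan is to reduce the lemma to the classical Pontrjagin--Thom theorem in the form given e.g.\ in \cite[Chapter IV \S 7]{Rudyak} or \cite[\S II]{Stong}, and then translate the resulting description of homotopy groups in terms of normal bundles into the tangential description asserted in the statement.

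First I would fix a concrete model of $\MT\theta(d)$. Choose $N \gg 0$, and let $V_d^{\theta,\perp}\to B(d)$ be a complement of $V_d^\theta$ inside a rank--$N$ trivial bundle, so that $V_d^\theta \oplus V_d^{\theta,\perp}\cong \underline{\bR}^{N}$. Then $\MT\theta(d)$ is, by definition, the spectrum whose $(N-d)$-th space is $\mathrm{Th}(V_d^{\theta,\perp})$, and consequently
\[
\pi_k(\MT\theta(d)) \;=\; \colim_{N\to\infty}\pi_{k+N-d}\!\left(\mathrm{Th}(V_d^{\theta,\perp})\right).
\]

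Next I would apply Pontrjagin--Thom to the rank--$(N-d)$ vector bundle $V_d^{\theta,\perp}\to B(d)$. For $N$ large compared to $d+k$, transversality identifies $\pi_{k+N-d}(\mathrm{Th}(V_d^{\theta,\perp}))$ with the bordism group of triples $(M^{d+k},\ell,\psi)$, where $M$ is a closed $(d+k)$-manifold smoothly embedded in $\bR^{k+N}$, $\ell:M\to B(d)$ is continuous, and $\psi:\nu_M \xrightarrow{\cong} \ell^\ast V_d^{\theta,\perp}$ is an isomorphism of the normal bundle of the embedding; bordisms are required to be embedded in $\bR^{k+N}\times[0,1]$ and to carry compatible data. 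This is completely standard.

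Then I would convert the normal--bundle datum $\psi$ into the asserted tangential datum $\varphi:TM \to \ell^\ast V_d^\theta \oplus \bR^k$ (stably). Direct--summing the trivial $\ell^\ast V_d^\theta$ with both sides of $TM\oplus \nu_M \cong \underline{\bR}^{k+N}$ and using $\psi$ together with $V_d^\theta\oplus V_d^{\theta,\perp}\cong \underline{\bR}^{N}$ gives
\[
TM\oplus \ell^\ast V_d^\theta \oplus \ell^\ast V_d^{\theta,\perp} \;\cong\; \ell^\ast V_d^\theta \oplus \underline{\bR}^{k+N},
\]
hence after stabilizing by $V_d^{\theta,\perp}$ a canonical stable isomorphism $\varphi:TM \oplus \bR^{N}\cong \ell^\ast V_d^\theta \oplus \bR^{k+N}$, i.e.\ a stable bundle isomorphism $TM \cong \ell^\ast V_d^\theta \oplus \bR^k$. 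The construction is clearly reversible: from $\varphi$ one recovers, after choosing an embedding into $\bR^{k+N}$ for large $N$, an isomorphism $\psi$ on normal bundles, unique up to the same stabilization. Passing to the colimit in $N$ absorbs the choice of embedding (by Whitney) and the stabilization ambiguity in the tangential isomorphism.

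Finally I would check that the resulting bijection on bordism classes is a group isomorphism: on the Thom spectrum side addition is disjoint union after a suitable re--embedding, and on the tangential side it is again disjoint union, so the two operations match. The main thing to keep careful track of will be the ``stable'' nature of $\varphi$: two triples $(M,\ell,\varphi)$ and $(M,\ell,\varphi')$ must be declared equivalent precisely when $\varphi$ and $\varphi'$ agree after a further direct sum with an identity map, since this is exactly the equivalence relation built into the colimit defining $\pi_k$ of a spectrum. Once that is set up correctly, the remaining verifications are bookkeeping rather than substantive.
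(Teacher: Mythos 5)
Your overall strategy — run the classical Pontrjagin--Thom construction for the finite Thom spaces modeling $\MT\theta(d)$ and then trade the normal--bundle identification for a stable tangential one using $TM\oplus\nu_M\cong\underline{\bR}$ and $V_d^\theta\oplus V_d^{\theta,\perp}\cong\underline{\bR}^N$ — is exactly what the paper has in mind (the paper just cites Rudyak and Stong and calls it "straightforward"), and your step converting $\psi:\nu_M\cong\ell^*V_d^{\theta,\perp}$ into the stable isomorphism $\varphi:TM\cong\ell^*V_d^\theta\oplus\bR^k$ is done correctly.

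However, your opening paragraph has an indexing error that contradicts your own subsequent bookkeeping. With $V_d^{\theta,\perp}$ of rank $N-d$, the space $\mathrm{Th}(V_d^{\theta,\perp})$ is the \emph{$N$-th} space of $\MT\theta(d)$, not the $(N-d)$-th; the spectrum-level Thom class of $\MT\theta(d)=B(d)^{-V_d^\theta}$ sits in degree $-d$, so a rank-$(N-d)$ Thom space must be placed in level $N$. Accordingly $\pi_k(\MT\theta(d))=\colim_N \pi_{k+N}\bigl(\mathrm{Th}(V_d^{\theta,\perp})\bigr)$, not $\colim_N\pi_{k+N-d}$. As written, your displayed formula would make $\pi_k(\MT\theta(d))$ a colimit of $\pi_{k+N-d}$ of a Thom space of a rank-$(N-d)$ bundle, which by transversality would classify $k$-manifolds embedded in $\bR^{k+N-d}$ — off by $d$ from the lemma. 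Note that in the next paragraph you in fact take $M^{d+k}\subset\bR^{k+N}$ with $\nu_M$ of rank $N-d$, which is the collapse map landing in $\pi_{k+N}$, i.e.\ the \emph{corrected} grading; so your later steps silently use the right convention while the two displayed formulas at the start do not. Fix the level to $N$ and the colimit to $\pi_{k+N}$ and the rest of your argument goes through as written.
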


Information about the low--dimensional homotopy groups of Madsen--Tillmann spectra can be obtained from the cofibre sequence \cite{GMTW}
\begin{equation}\label{cofibre-madsentillmann}
\MT \theta(n+1) \to \Sigma^\infty B(n+1)_+ \to \MT \theta(n)
\end{equation}
of spectra, which is natural in $\theta$. The connecting map $\MT \theta(n) \to \Sigma \MT \theta(n+1)$ is $0$-connected, and the colimit $\colim_n \Sigma^n \MT \theta(n)$ can be identified with the usual Thom spectrum $M \theta$. It follows that for $k<0$
\[
\pi_k (\MT\theta(n)) \cong \pi_{n+k} (M \theta)\cong \Omega^\theta_{n+k}
\]
is the ordinary bordism group of $(n+k)$-manifolds with stable \emph{tangential} $\theta$-structure (classical bordism theory uses $\theta$-structures on the stable \emph{normal} bundle).

If $B$ is $0$-connected (as it is in the cases $B= B\SO$ and $B=\Spin$ we are interested in), we obtain from \eqref{cofibre-madsentillmann} an exact sequence 
\begin{equation}\label{eqn:pinullmtspectra}
\xymatrix{
\pi_0 (\MT \theta(n+1)) \ar[d] &   \\
\pi_0 (\Sigma^\infty B(n+1)_+) \ar[d] \ar[r]^-{\cong} & \bZ \\
\pi_0 (\MT \theta(n))  \ar[d] & \\
\pi_{-1} (\MT \theta(n+1)) \ar[d] \ar[r]^-{\cong} & \Omega_n^\theta \\
\pi_{-1} (\Sigma^\infty B(n+1)_+) \ar@{=}[r] & 0.
}
\end{equation}
Using the description of $\pi_k (\MT \theta (n))$ in terms of cobordism classes from Lemma \ref{lem:homotopygroupMTSpectrum} and using the identification $\pi_0 (\Sigma^\infty B_+)=\bZ$, we can describe the maps in \eqref{eqn:pinullmtspectra} as follows. 
 
\begin{itemize}
\item The map $\pi_0 (\MT\theta(n)) \to \pi_{-1} (\MT \theta(n+1)) = \Omega_n^\theta$ sends $(M,\ell,\varphi)$ to $(M,\iota_n \circ \ell, \id_{\bR} \oplus \varphi)$ where $\iota_n: B(n) \to B(n+1)$ is the inclusion. 
\item The map $\pi_0 (\MT \theta(n+1)) \to \pi_0 (\Sigma^\infty B(n+1)_+) \cong \bZ$ sends $(M,\ell,\varphi)$ to the number $\scpr{e (\ell^* V_{n+1}^{\theta}),[M]} \in \bZ$ ($e$ denotes the Euler class). 
\item $\bZ \cong \pi_0 (\Sigma^\infty B(n+1)_+) \to \pi_0 (\MT \theta(n))$ sends $1$ to the class of $S^n$, with the constant map $S^n \to B(n)$ and the usual isomorphism $TS^n \oplus \bR \cong \bR^{n+1}$.
\end{itemize}

\begin{lem}\label{lem:pinullmt(5)}
There is a commutative diagram with exact rows
\[
\xymatrix{
0 \ar[r] & \bZ/2 \ar@{=}[d]\ar[r] & \pi_0 (\MT \Spin(5)) \ar[d] \ar[r] & \Omega_5^\Spin\ar[d] \ar[r] & 0\\
0 \ar[r] & \bZ/2\ar[r] & \pi_0 (\MT \SO(5)) \ar[r] & \Omega^\SO_5 \ar[r] & 0 .
}
\]
For any field $\bF$, the map $[M,f,\varphi] \mapsto \Kerv{\bF}(M)$ gives a well-defined splitting of the maps $\bZ/2 \to \pi_0(\MT\Spin(5)), \, \pi_0 (\MT \SO(5))$.
Hence 
\[ 
\pi_0 (\MT \Spin (5)) \cong \bZ/2; \; \pi_0 (\MT \SO (5))
\cong \bZ/2^2, 
\] 
and the map $\pi_0 (\MT \Spin (5)) \to \pi_0 (\MT \SO (5))$ is injective.
\end{lem}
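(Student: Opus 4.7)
The plan is to extract everything from the long exact sequence \eqref{eqn:pinullmtspectra} specialised to $n=5$. The cornerstone is to show
\[
\im\bigl(a:\pi_0(\MT\theta(6))\to\bZ\bigr)=2\bZ
\]
for both $\theta=\Spin$ and $\theta=\SO$, where $a$ sends $[M^6,\ell,\varphi]$ to $\langle e(\ell^*V_6^\theta),[M]\rangle$. The lower bound is witnessed by $(S^6,\ell,\varphi)$ with $\ell$ classifying $TS^6$, which yields $\chi(S^6)=2$ in either flavour. For the upper bound, Stiefel--Whitney classes are stable, so $w_6(\ell^*V_6^\theta)=w_6(TM)\in H^6(M;\bF_2)$ and hence $\langle e(\ell^*V_6^\theta),[M]\rangle\equiv\chi(M)\pmod 2$; a closed orientable $6$-manifold has even $\chi$ because Poincaré duality equips $H^3(M;\bR)$ with a nondegenerate antisymmetric form, forcing $b_3(M)$ to be even and $\chi(M)=2-2b_1+2b_2-b_3$ to be even.

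Combining $\im(a)=2\bZ$ with $\Omega_5^\Spin=0$ and $\Omega_5^\SO=\bZ/2$ in \eqref{eqn:pinullmtspectra} gives the two short exact sequences, and naturality of the cofibre sequence \eqref{cofibre-madsentillmann} in $\theta$ supplies the commutative diagram and identifies the left vertical map with the identity on $\bZ/2$. In either case the generator of the $\bZ/2$ subgroup is $[S^5]$ with its standard $\theta$-structure, as prescribed by the explicit description of the connecting map recalled in the excerpt.

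For the Kervaire splitting, $\Kerv{\bF}(S^5)=1$ is immediate from $H^*(S^5;\bF)$, so the substance is well-definedness of $[M,\ell,\varphi]\mapsto\Kerv{\bF}(M)$ on $\pi_0(\MT\theta(5))$. Since $\Kerv{\bF}$ is additive under disjoint union and invariant under orientation reversal, this reduces to $\Kerv{\bF}(M)=0$ whenever $(M,\ell,\varphi)$ is MT-null-cobordant through some $(W^6,\ell_W,\varphi_W)$. The strategy is a rank count in the Poincaré--Lefschetz long exact sequence of $(W,\partial W)$ over $\bF$, producing an identity $\Kerv{\bF}(\partial W)\equiv\chi(W)+r_3\pmod 2$ with $r_3=\rank(H^3(W,\partial W;\bF)\to H^3(W;\bF))$; $r_3$ is even because via the cup-product perfect pairing it coincides with the rank of the alternating intersection form on $H^3(W,\partial W;\bF)$ (in characteristic $2$ this needs Wu's formula $\alpha\cup\alpha=v_3\cup\alpha$ plus a vanishing of the relevant Wu class forced by the MT-structure). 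Combining this with the parity of $\chi(W)$, itself imposed by the stable splitting $TW\cong\ell_W^*V_5^\theta\oplus\bR$ through a relative Euler-class argument, yields $\Kerv{\bF}(M)=0$.

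The splitting then forces $\pi_0(\MT\SO(5))\cong\bZ/2\oplus\bZ/2$ rather than $\bZ/4$, and applying the short five lemma to the map of short exact sequences --- with left vertical the identity on $\bZ/2$ and right vertical the zero map $0\to\bZ/2$ --- gives injectivity of $\pi_0(\MT\Spin(5))\to\pi_0(\MT\SO(5))$. The main obstacle I anticipate is the well-definedness of $\Kerv{\bF}$, specifically the characteristic-$2$ case of the alternating-form argument and the manifold-with-boundary version of the $\chi\equiv w_6\pmod 2$ relation, both of which require careful bookkeeping of the stable bundle structure supplied by the MT-null-cobordism.
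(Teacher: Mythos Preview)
Your argument for $\im(a)=2\bZ$ and the resulting commutative diagram of short exact sequences is exactly the paper's proof: reduce $e$ modulo $2$ to $w_6(TN)$, recognise this as $\chi(N)\bmod 2$, and observe $\chi(N)$ is even for closed oriented $6$-manifolds; the $S^6$ example realises the value $2$.

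The one genuine difference is the Kervaire splitting. The paper does not argue this directly but simply invokes \cite[Proposition~A.6]{Eb13}, where well-definedness of $\Kerv{\bF}$ on $\pi_0(\MT\SO(4m+1))$ is established (essentially by interpreting the semicharacteristic as a mod-$2$ index of a universal elliptic operator, so that bordism invariance is automatic). Your direct approach via the Poincar\'e--Lefschetz sequence is sound and gives the same conclusion, but the two points you flag as obstacles are real and deserve a word. For the characteristic-$2$ alternating form: since $W$ is oriented, the Wu class $v_3=w_1w_2$ vanishes, and a relative Wu formula then gives $\langle\alpha\cup\alpha,[W,\partial W]\rangle=\langle v_3\cup\alpha,[W,\partial W]\rangle=0$, forcing $r_3$ even over $\bF_2$ as well. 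For the parity of $\chi(W)$: the stable isomorphism $TW\cong\ell_W^*V_5^\theta\oplus\bR$ can in fact be destabilised to an honest rank-$6$ isomorphism relative to the boundary, because the fibre $O/O(6)$ is $5$-connected and $(W,\partial W)$ has relative dimension $6$; the resulting nowhere-zero section of $TW$ extends the outward normal, so the relative Euler class vanishes and one actually gets $\chi(W)=0$, not merely even. With these two points supplied, your argument goes through; the paper's route via index theory is shorter to state but less elementary.
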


\begin{proof}
For $B = B \SO$ or $B \Spin$, consider the portion
\[
\pi_0 (\MT \theta(6)) \to \bZ \to \pi_0 (\MT\theta(5))
\]
of the exact sequence \eqref{eqn:pinullmtspectra}. The first map takes $[N,\ell,\varphi]$ to 
\[
\scpr{e(\ell^* V_6^\theta),[N]} \equiv \scpr{w_6(\ell^* V_6^\theta),[N]} = \scpr{w_6 (TN),[N]} \equiv \chi(N) \equiv 0 \pmod 2
\]
and so maps into $2\bZ$. On the other hand $[S^6,*,\varphi]$, where the stable isomorphism $\varphi: \bR^6 \cong TS^6$ comes from the usual $TS^6 \oplus \bR = \bR^7$, is taken to $2$. This gives the commutative diagram claimed in the lemma. 

It is moreover proven in \cite[Proposition A.6]{Eb13} that the map $M \mapsto \Kerv{\bF} (M)$ assigning to $M$ its Kervaire semicharacteristic gives a well-defined homomorphism $\pi_0 (\MT \SO (5)) \to \bZ/2$, for each field $\bF$; the same applies to $\pi_0 (\MT \Spin(5))$ of course.
\end{proof}

To extract information about $\pi_1 (\MT \theta(4))$ from Lemma \ref{lem:pinullmt(5)}, we use the cofibre sequences $\MT\theta(5) \to B(5) \to \MT \theta(4)$ which give a commutative diagram
\[
\xymatrix{
\pi_1 (\Sigma^\infty B \Spin(5)_+) \ar[r]^-{\cong} \ar[d] & \pi_1 (\Sigma^\infty B \SO(5)_+) = \bZ/2 \ar[d]\\
\pi_1 (\MT \Spin (4)) \ar[r] \ar[d] & \pi_1 (\MT \SO (4)) \ar[d]\\
\pi_0 (\MT \Spin (5)) \ar[r] \ar[d] & \pi_0 (\MT \SO (5))\ar[d] \\
\bZ   \ar[r]^{=} & \bZ}
\]
with exact columns. The topmost isomorphisms hold since $B \Spin (5)$ and $B\SO(5)$ are $1$-connected and because $\pi_1^\st =\bZ/2$. The two $\pi_0$-groups are finite by Lemma \ref{lem:pinullmt(5)}, hence we may replace the $\bZ$'s in the bottom by $0$. Using Lemma \ref{lem:pinullmt(5)} once more, we obtain the commutative diagram \begin{equation}\label{eqn:rawdiag-pi1mt}
\xymatrix{
\bZ/2 \ar@{=}[r] \ar[d] & \bZ/2 \ar[d]\\
\pi_1 (\MT \Spin (4)) \ar[r] \ar[d] & \pi_1 (\MT \SO (4)) \ar[d]\\
\bZ/2 \ar[d] \ar[r] & \bZ/2 \oplus \bZ/2 \ar[d] \\
0    & 0}
\end{equation}
with exact columns. Therefore:
\begin{lem}\label{lem:grouporder}
The groups $\pi_1 (\MT \SO (4))$ and $\pi_1 (\MT \Spin (4))$ are finite $2$-groups of order at most $8$ and $4$, respectively.
\end{lem}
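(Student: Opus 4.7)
The plan is to read off both bounds directly from the two exact columns in the diagram \eqref{eqn:rawdiag-pi1mt}, which have already been assembled from the cofibre sequence \eqref{cofibre-madsentillmann} together with Lemma \ref{lem:pinullmt(5)} and the identification of the stable $1$-stem $\pi_1^\st \cong \bZ/2$.

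I would first record the Spin column as a four-term exact sequence
\[
\bZ/2 \to \pi_1(\MT\Spin(4)) \to \bZ/2 \to 0.
\]
Exactness at the rightmost $\bZ/2$ forces the right-hand map to be surjective, while exactness at $\pi_1(\MT\Spin(4))$ identifies its kernel with the image of the leftmost arrow, a subgroup of $\bZ/2$ of order at most $2$. Thus $\pi_1(\MT\Spin(4))$ is an extension of $\bZ/2$ by a group of order at most $2$, so $|\pi_1(\MT\Spin(4))|\leq 4$; and since an extension of a finite $2$-group by a finite $2$-group is itself a $2$-group, $\pi_1(\MT\Spin(4))$ is a $2$-group.

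Exactly the same reasoning applied to the $\SO$ column
\[
\bZ/2 \to \pi_1(\MT\SO(4)) \to \bZ/2 \oplus \bZ/2 \to 0
\]
exhibits $\pi_1(\MT\SO(4))$ as an extension of $\bZ/2 \oplus \bZ/2$ by a group of order at most $2$, yielding both $|\pi_1(\MT\SO(4))|\leq 2\cdot 4 = 8$ and the fact that it is a $2$-group.

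The whole argument is just bookkeeping once the diagram \eqref{eqn:rawdiag-pi1mt} is in hand, and I do not anticipate any real obstacle here. The substantive input, namely computing $\pi_0(\MT\theta(5))$ via the Kervaire semicharacteristic and using that every homomorphism from a finite group to $\bZ$ vanishes in order to truncate the long exact sequence at the bottom, has already been carried out in the preceding subsection.
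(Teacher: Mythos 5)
Your proof is correct and matches the paper exactly: the paper states the lemma immediately after diagram \eqref{eqn:rawdiag-pi1mt} with the single word ``Therefore:'', and the content you spell out---surjectivity onto the bottom term, bounding the kernel by the order of the top term, and noting that an extension of a $2$-group by a $2$-group is a $2$-group---is precisely what that ``Therefore'' elides.
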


\subsection{Construction of the invariants and their evaluation}\label{subsec:conclusionargument}

It will be important for us to describe the composition
\begin{equation}\label{eqn:fromMCGtopi1mt}
\tilde{\alpha}: \Gamma^\theta (M,\ell) \stackrel{\alpha}{\to} \pi_1 (\Omega^\infty \MT \theta(d),M) \cong \pi_1 (\Omega^\infty \MT \theta(d),*) \cong \pi_1 (\MT \theta(d))
\end{equation}
in terms of the bordism-theoretic description of $\pi_1 (\MT \theta(d))$. A map $S^1 \to B \Diff^\theta(M)$ corresponding to $f \in \Gamma(M,\ell)$ classifies a fibre bundle $\pi: E =T(f) \to S^1$, together with a bundle map $T_v E \to V_d^\theta$. Using the standard trivialization of $TS^1$ and the isomorphism $TE \cong T_v E \oplus \pi^* TS^1$, we obtain a bundle map $TE \to V_d^\theta \oplus \bR$; in other words a map $m: E \to B(d)$, together with an isomorphism $TE \cong m^* V_d^\theta \oplus \bR$. These data represent an element $\beta(E) \in \pi_1 (\MT \theta(d))$. Given this observation, it is suggestive to believe that \eqref{eqn:fromMCGtopi1mt} sends $f$ to the class $\beta(T(f))$ of its mapping torus, but this is \emph{not true}. 

The second map in \eqref{eqn:fromMCGtopi1mt} is given by the infinite loop space structure. So we must subtract from the map $f: S^1 \to B \Diff^\theta (M)$ which is pointed at $(M,\ell)$ the constant map at this same basepoint which is nothing else that the mapping torus of the identity on $M$; in other words the trivial bundle $M \times S^1\to S^1$. Therefore, the correct formula for \eqref{eqn:fromMCGtopi1mt} is
\begin{equation}\label{formula-foreqn:fromMCGtopi1mt}
\tilde{\alpha} (f) = \beta(T(f)) - \beta(T(\id_M)). 
\end{equation}
We now define 
\[
\varphi_1: \pi_1 (\MT \SO(4)) \to \bZ/2
\]
to be the composition 
\[
\pi_1 (\MT \SO(4)) \to \pi_0 (\MT \SO(5)) \to \Omega_2^\SO \cong \bZ/2 ;
\]
using that $T(\id_M )=S^1 \times M$ is nullbordant, we get 
\[
\varphi_1 (\alpha_* (f)) = w_2 w_3 ([T(f)]-[T(\id_M)]) = w_2 w_3 ([T(f)]) . 
\]

To define $\varphi_2$ and $\varphi_3$, we exploit the relation between Madsen--Tillmann spectra and family index theory that we worked out in \cite{Eb13}. 
For a smooth bundle $\pi:E \to B$ of oriented $d$-dimensional closed manifolds, we denote by $H^k (E \to B;\bR) \to B$ the real vector bundle whose fibre over $b$ is $H^k(M;\bR)$. This is flat. If $d=4$, the bundle $H^2 (E \to B;\bR)$ can be written as a direct sum 
\[
H^2 (E \to B;\bR) \cong H^{2,+} (E \to B;\bR) \oplus H^{2,-} (E \to B;\bR),
\]
so that the (fibrewise) intersection form is positive/negative definite on the summands. This splitting is only natural up to homotopy and the individual summands are not flat. Some linear combinations of these bundles arise as family indices of elliptic operators; for example $\sum_{k=0}^d (-1)^d H^k(E \to B;\bR)\in KO^0(B)$ is the family index of the Euler characteristic operator, and similarly $H^{2,+} (E \to B;\bR) - H^{2,-} (E \to B;\bR) \in KU^0(B)$ is (when $d=4$) the family index of the signature operator. 

\begin{prop}\label{prop:familyindextheoremexampel}
There are spectrum maps $\chi,\sigma: \MT \SO(4) \to \KO$, such that for each bundle $\pi:E \to B$ of closed oriented $4$-manifolds on finite CW base, the induced maps
\[
\Omega^\infty \chi, \Omega^\infty \sigma: B \stackrel{\alpha_E}{\to} \Omega^\infty \MT \SO(4){\to} \Omega^\infty \KO \simeq \bZ\times BO
\]
are classifying maps for the virtual vector bundles
\begin{equation}\label{eqn:indexbundle1}
H^0 (E \to B;\bR)-H^1 (E \to B;\bR) + H^2 (E \to B;\bR)-H^3 (E \to B;\bR)+H^4 (E \to B;\bR)
\end{equation}
and 
\begin{equation}\label{eqn:indexbundle2}
H^0 (E \to B;\bR) - H^1  (E \to B;\bR) + H^{2,+} (E \to B;\bR), 
\end{equation}
respectively. 
\end{prop}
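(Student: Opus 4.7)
The approach is via the Atiyah--Singer family index theorem. On each closed oriented Riemannian $4$-manifold $M$ there are two natural first-order elliptic operators whose symbols depend only on the tangent bundle and its orientation: the Euler characteristic operator $D_\chi = d + d^* \colon \Omega^{\even}(M) \to \Omega^{\odd}(M)$, whose analytic index is $\sum_{i=0}^4 (-1)^i H^i(M;\bR)$ by Hodge theory; and the operator $D_\sigma \colon \Omega^0(M) \oplus \Omega^{2,+}(M) \to \Omega^1(M)$, $(f,\omega)\mapsto df + d^*\omega$, whose index is $H^0(M;\bR) - H^1(M;\bR) + H^{2,+}(M;\bR)$ (the $(+)$-summand of $\Lambda^2$ being produced by the orientation, so that $D_\sigma$ is natural on oriented $4$-manifolds). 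Given a smooth bundle $\pi\colon E \to B$ of closed oriented $4$-manifolds equipped with a fibrewise Riemannian metric, these become families of elliptic operators along the fibres, and family Hodge theory identifies their analytic family indices in $KO^0(B)$ with the virtual bundles \eqref{eqn:indexbundle1} and \eqref{eqn:indexbundle2} respectively. Contractibility of the space of fibrewise metrics removes dependence on that choice.

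Next I would realise these natural transformations by genuine spectrum-level maps out of $\MT\SO(4)$. The symbol of a natural first-order elliptic operator on an oriented Riemannian $4$-manifold is a $\KO$-theoretic class on the Thom space of the universal oriented $4$-plane bundle $V_4^{\SO}\to B\SO(4)$, i.e.\ an element of $\KO^0(\mathrm{Th}\,V_4^{\SO})$. Parametrised Atiyah duality over $B\SO(4)$ identifies $\mathrm{Th}(-V_4^{\SO}) = \MT\SO(4)$ as the Spanier--Whitehead dual of $\mathrm{Th}\,V_4^{\SO}$, so any element of $\KO^0(\mathrm{Th}\,V_4^{\SO})$ determines a spectrum map $\MT\SO(4)\to\KO$. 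Applied to the symbol classes of $D_\chi$ and $D_\sigma$, this construction produces the desired maps $\chi$ and $\sigma$. By construction, precomposition of $\Omega^\infty \chi$ with the Madsen--Tillmann map $\alpha_E$ is Atiyah--Singer's topological family index of the family $\{D_{\chi,b}\}_{b\in B}$; the family index theorem identifies this with the analytical index computed in the previous paragraph, and similarly for $\sigma$.

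The main technical obstacle is the spectrum-level construction in the second step. Since $V_4^{\SO}$ is not $\KO$-orientable (the class $w_2$ is nonzero on $B\SO(4)$), there is no universal $\KO$-Thom isomorphism between $\KO^*(B\SO(4))$ and $\KO^*(\MT\SO(4))$, so one cannot produce $\chi$ or $\sigma$ simply by ``pulling back a virtual bundle.'' Instead, the construction must proceed from the concrete symbol data of the two operators, which provides $\KO$-orientation information tailored to these specific complexes even in the absence of a universal $\KO$-orientation on $V_4^{\SO}$. An alternative route would be to invoke a general index-map construction of the form $\MT\theta(d)\to\KO$ along the lines of the author's earlier work cited as \cite{Eb13}, specialised to the two operators at hand. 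Once the spectrum maps are in place, the family Atiyah--Singer index theorem directly yields the identification with \eqref{eqn:indexbundle1} and \eqref{eqn:indexbundle2}.
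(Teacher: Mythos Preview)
Your proposal is correct and follows essentially the same approach as the paper: both use the Euler characteristic operator $d+d^*$ and the selfduality operator $D^+$ on $\cA^0\oplus\cA^{2,+}$, compute their indices via Hodge theory, and then produce the spectrum maps from the universal symbol classes, invoking the index-theoretic machinery of \cite{Eb13} (and its real-$K$-theory extension in \cite{Eb19}) for the passage from symbol to spectrum map. The paper simply cites \cite{Eb13} directly for the ``universal symbol'' construction rather than spelling out the parametrised Atiyah duality you sketch, but that is exactly what underlies the cited reference.
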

The maps $\chi$ and $\sigma$ induce maps on homotopy groups $\pi_1 (\MT \SO(4)) \to \pi_1 (\KO)=\bZ/2$. 
It follows that $\varphi=(\varphi_1,\chi_*,\sigma_*): \pi_1 (\MT \SO(4)) \to \bZ/2^3$ satisfies \eqref{eqn:formula-invariant} and that $\im (\pi_1 (\MT \Spin (4)) \to \pi_1(\MT \SO(4))$ is contained in $\ker (\varphi_1)$. 

\begin{proof}
For a closed Riemannian $d$-manifold $M$, let $0 \to \cA^0 (M) \stackrel{d}{\to} \ldots \stackrel{d}{\to} \cA^d (M)  \to 0 $ be the de Rham complex (we use $\bR$-valued forms). This is a real elliptic complex and the operator $D= d+d^*: \cA^{\mathrm{ev}}(M) \to \cA^{\mathrm{odd}}(M)$ is elliptic. This construction carries over to the family setting, and \eqref{eqn:indexbundle1} is the index bundle of $D$. 

To realize \eqref{eqn:indexbundle2} as the index bundle of a family of real elliptic operators, we use the \emph{selfduality complex}, a variant of the de Rham complex which exists on oriented $4$-manifolds and plays an important role in gauge theory, see e.g. \cite[p. 360 and 446]{Scorpan}.  
For oriented $M^4$, the Hodge star $\star: \Lambda^2 TM \to \Lambda^2 TM$ is an involution, and we let as usual $\Lambda^{2,\pm} TM$ be its $\pm 1$-eigenbundles and let $p_\pm : \Lambda^2 TM \to \Lambda^{2,\pm} TM$ be the projection maps. We define $d^+:= \sqrt{2} p_+ d: \cA^1 (M) \to \cA^{2,+}(M)$ (the factor of $\sqrt{2}$ is here for cosmetic reasons and makes the $D^+$ below into a Dirac operator); the selfduality complex is 
\begin{equation}\label{eqn:ellipticcompex}
0 \to \cA^0 (M)\stackrel{d}{\to} \cA^1 (M) \stackrel{d^+}{\to} \cA^{2,+}(M) \to 0.
\end{equation}
This is an elliptic complex, so 
\[
D^+:=d+(d^+)^*: \cA^0 (M)\oplus \cA^{2,+}(M) \to \cA^1 (M)
\]
is an elliptic operator. Using the Hodge decomposition theorem, it is not hard to see that 
\[
\ker (D^+)=\Harm^0(M) \oplus \Harm^{2,+}(M); \; \coker (D^+)=\Harm^1 (M)
\]
where $\Harm^k(M)$ denotes the space of harmonic $k$-forms and $\Harm^{2,+}(M) = \Harm^2 (M) \cap \cA^{2,+}(M)$. It follows that 
\[
\ind(D^+) = b_0(M) - b_1 (M) + b_2^+(M) \in \bZ. 
\]
Again, the construction carries over to families. If $E \to B$ is a bundle of $4$-dimensional oriented manifolds, we get a family of (real) elliptic operators $D^+$, and the family index is 
\[
\ind (D^+) = H^0 (E \to B;\bR) - H^1 (E \to B;\bR) + H^{2,+}(E \to B;\bR) \in KO^0 (B). 
\]
Using that $KO^0(B)=[B;\Omega^\infty\KO]$, we can view $\ind(D)$ and $\ind(D^+)$ as a homotopy class of maps $B \to \Omega^\infty \KO$. Expressing these homotopy classes in topological terms is the aim of the Atiyah--Singer family index theorem \cite{AtiyahSingerIV}, \cite{AtiyahSingerV}. In \cite{Eb13}, it was observed that the family index theorem can be reformulated in terms of Madsen--Tillmann spectra. 

The operators $D$ and $D^+$ are ``universal'' in the sense of \cite[\S 3.2]{Eb13}, there, we constructed universal symbol classes $\smb_D, \smb_{D^+} \in KO^0 (\MT\SO(4))$ (denoted $\mathrm{th}\sigma_D$ in loc. cit.). 

The classes $\smb_D$, $\smb_{D^+}$ are elements of the $K$-theory of the \emph{spectrum} $\MT \SO(4)$ and can be viewed as maps $ \MT \SO(4) \to \KO$ of spectra. Applying $\Omega^\infty$ produces maps $\Omega^\infty \MT \SO(4)\to \Omega^\infty \KO$. As shown in \cite[\S 3.2]{Eb13}, the Atiyah--Singer family index theorem says that $(\Omega^\infty \smb_D) \circ \alpha_E \sim \ind(D): B \to \Omega^\infty \KO$; and the analogous formula holds for $D^+$. 
We now put $\chi: =\smb_D$ and $\sigma:= \smb_{D^+}$. 

The above outline was slightly inaccurate because \cite{Eb13} only covered the case of \emph{complex} $K$-theory whereas here we need to use indices in real $K$-theory. One can modify the argument of that paper in order to cover the real case instead, using the real family index theorem \cite{AtiyahSingerV} in place of \cite{AtiyahSingerIV} and Real $K$-theory \cite{AtiyahKR}. 
Alternatively, the proof of the index theorem given in the more recent paper \cite{Eb19} deals with the real and complex case on the same footing. 
\end{proof}

Together with Lemma \ref{lem:grouporder}, the proof of Proposition \ref{prop:bordismcalcul} is completed as follows. 

\begin{lem}\label{lem:realizationofinvariants}
There exist orientation-preserving diffeomorphisms $f_j:M_j \to M_j$ of $1$-connected $4$-manifolds, $j=1,2,3$, such that $\{\varphi(\tilde{\alpha}(f_1)),\varphi(\tilde{\alpha}(f_2)),\varphi(\tilde{\alpha}(f_3))\}$ generates $\bZ/2^3$. The manifolds $M_1$ and $M_2$ are spin. 
\end{lem}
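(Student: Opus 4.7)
My strategy is to exhibit three explicit orientation-preserving diffeomorphisms and read off their images under $\varphi$ using formula \eqref{eqn:formula-invariant}. Since Lemma \ref{lem:grouporder} already bounds $|\pi_1(\MT\SO(4))| \leq 8$, three diffeomorphisms whose $\varphi$-images are linearly independent in $\bZ/2^3$ automatically force $\varphi$ to be an isomorphism. The same examples, combined with the bound $|\pi_1(\MT\Spin(4))| \leq 4$ and the easy inclusion of its image into $\ker(\varphi_1)$, will pin down the spin group as exactly $\ker(\varphi_1)$.

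For the two spin examples I would first observe that when $M^4$ is $1$-connected and spin the mapping torus $T(f)$ is itself spin, so $\varphi_1(\tilde\alpha(f)) = 0$ automatically. This follows from the Serre spectral sequence of $M \to T(f) \to S^1$: since $M$ is $1$-connected, $H^2(T(f);\bF_2)$ injects into $H^2(M;\bF_2)^{\bZ}$, and the restriction of $w_2(TT(f))$ to a fibre equals $w_2(TM \oplus \bR) = 0$. It therefore suffices to pick $f_1, f_2$ on spin $1$-connected $M_1, M_2$ whose $(\varphi_2, \varphi_3)$-images span $\bZ/2^2$. I would take $M_1 = M_2 = S^2 \times S^2$, with $f_1$ the swap of factors and $f_2$ the product of the two antipodal maps (which is orientation-preserving since $(-1)(-1)=+1$). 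In the signature-diagonalizing basis $e_\pm = (a \pm b)/\sqrt{2}$ of $H^2(S^2 \times S^2;\bR)$, the swap acts by $\mathrm{diag}(+1,-1)$ and $f_2$ acts by $-\id$, yielding $\varphi(\tilde\alpha(f_1)) = (0,1,0)$ and $\varphi(\tilde\alpha(f_2)) = (0,0,1)$.

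For the non-spin example I would take $M_3 = \cp^2$ and $f_3 = $ complex conjugation, which is orientation-preserving because $\dim_\bC \cp^2$ is even. The hyperplane class $L \in H^2(\cp^2;\bR)$ is represented by a $\cp^1 \subset \cp^2$ whose complex orientation is reversed by conjugation, giving $H^2(f_3;\bR) = -1$, while $H^2(f_3;\bF_2) = +1$. Formula \eqref{eqnw2w3mappingtorus} then gives $\varphi_1(\tilde\alpha(f_3)) = 0 + 1 = 1$, and the positive-definiteness of $I_{\cp^2}$ forces $(\det, \delta_+) = (-1,-1)$, so $\varphi(\tilde\alpha(f_3)) = (1,1,1)$. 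The three vectors $(0,1,0), (0,0,1), (1,1,1)$ are $\bF_2$-linearly independent in $\bZ/2^3$, completing the argument.

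The main step in this plan is identifying appropriate examples; once $S^2 \times S^2$ and $\cp^2$ are on the table, the remaining characteristic-class and linear-algebra computations are straightforward and essentially forced.
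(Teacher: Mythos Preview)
Your proposal is correct and essentially identical to the paper's own proof: the same three examples ($f_1$ the factor swap and $f_2$ the product of degree $-1$ maps on $S^2\times S^2$, $f_3$ complex conjugation on $\cp^2$) with the same computed values $(0,1,0)$, $(0,0,1)$, $(1,1,1)$. Your added justifications (the Serre spectral sequence argument for $w_2(T(f))=0$ in the spin case, the explicit signature-diagonalizing basis) are more detailed than the paper's terse treatment but not substantively different.
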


\begin{proof}
Let $M_1=M_2 = S^2 \times S^2$ which is spin, and let $f_1\in \Diff^+ (S^2 \times S^2)$ be the map that flips the two factors. With respect to a suitable basis of $H^2 (S^2 \times S^2;\bR)$, we have 
\[
I_{S^2\times S^2} = \twomatrix{0}{1}{1}{0}, \; H^2(f_1;\bR)= \twomatrix{0}{1}{1}{0}. 
\]
We have $\varphi_1 (\tilde{\alpha}(f_1))=0$ since $S^2 \times S^2$ is spin, and compute easily that 
\[
\varphi(\tilde{\alpha}(f_1)) = (0,1,0) \in \bZ/2^3.
\]
Let $f_2\in \Diff(S^2 \times S^2)$ be the product of two degree $-1$ diffeomorphisms. With respect to the above basis, we have 
\[
H^2(f_1;\bR)= \twomatrix{-1}{0}{0}{-1}
\]
and therefore 
\[
\varphi(\tilde{\alpha}(f_2)) = (0,0,1) \in \bZ/2^3.
\]
Let $M_3=\cp^2$ and let $f_3\in \Diff^+( \cp^2)$ be the complex conjugation. Then $I_{\cp^2}$ is positive definite, and
\[
\varphi(\tilde{\alpha}(f_3)) = (1,1,1) \in \bZ/2^3. 
\]
\end{proof}

\section{Proof of Theorem \ref{thm:hogherhomotopy}}

This is conceptually very similar to Theorem \ref{thm:rigidity-dimension4}. Let 
\[
x \in \ker (\pi_j (\Diff^+ (M)) \to \pi_j (\Homeo^+(M)))
\]
be as in Theorem \ref{thm:auckrub}. As in the proof of Proposition \ref{prop:proofmainthm:generaloutline} (look at the diagram \eqref{eqn:priductdiagramMTmaps} in particular), we must prove that the map 
\begin{equation}\label{aucklyrubmap}
\alpha_*:\pi_{j} (\Diff^+ (M)) = \pi_{j+1} (B \Diff^+ (M)) \to \pi_{j+1} (\Omega^\infty \MT \theta(4);\alpha(M)) \otimes \bQ
\end{equation}
annihilates $x$, where again $\theta= \SO$ or $\Spin$, depending on the case at hand. Let us first list recall the computation of rational homotopy and cohomology of $\Omega^\infty \MT \SO(4)$ and $\Omega^\infty \MT \Spin(4)$; see \cite[\S 2.4]{Eb13} for an exposition of this well-known material. We shall need the following facts.
\begin{enumerate}
\item The Thom isomorphism provides an isomorphism 
\[
\thom: H^*(B \SO(4);\bQ) \cong H^{*-4} (\MT\SO(4);\bQ)
\]
of the cohomology of $B \SO(4)$ with the cohomology of the \emph{spectrum} $\MT\SO(4)$ (this includes the case $*<4$). The cohomology suspension gives a map 
\[
\susp: H^* (\MT\SO(4);\bQ) \to H^* (\Omega^\infty\MT\SO(4);\bQ)
\]
to the cohomology of its infinite loop space. The cohomology of the infinite loop space is a graded commutative algebra, but the spectrum cohomology is only a graded vector space. The induced map of graded algebras
\[
\Lambda (\susp \circ \thom): \Lambda (H^{*>4} (B\SO(4);\bQ) \to H^* (\Omega^\infty_0 \MT \SO(4);\bQ)
\]
from the free graded commutative algebra on the source is an isomorphism.
\item For any class $c \in H^* (B \SO(4);\bQ)=\bQ[e,p_1]$ of degree $k>4$, we obtain a class 
\[
\lambda_c:= \susp (\thom(c)) \in H^{k-4}(\Omega^\infty\MT \SO(4);\bQ). 
\]
By step (1), $H^*(\Omega^\infty_0 \MT\SO(4);\bQ)$ is a polynomial algebra on all $\lambda_c$, where $c$ runs through all monomials on $e$ and $p_1$ of total degree $>4$. 
\item If $\pi:E \to B$ is a bundle of closed oriented $4$-manifolds with Madsen--Tillmann map $\alpha_E: B \to \Omega^\infty \MT \SO(4)$, we have 
\[
\alpha_E^* \lambda_c = \kappa_c (E) := \pi_!(c(T_v E)),
\]
the Miller-Morita--Mumford class of $E$ associated to $c$, obtained by fibre integration of the characteristic class $c$ evaluated on the vertical tangent bundle. 
\item Being a connected loop space, $\Omega^\infty_0 \MT \SO(4)$ has the property that the rational Hurewicz map $\pi_* (\Omega^\infty_0 \MT \SO(4)) \otimes \bQ \to H_* (\Omega^\infty_0 \MT \SO(4);\bQ)$ is injective. Hence a homotopy class $y \in \pi_j(\Omega^\infty_0 \MT \SO(4))$ vanishes after rationalization provided that $\scpr{\hur(y),x}=0$ for all $x \in H^* (\Omega^\infty_0 \MT \SO(4);\bQ)$. 
\item The same recipe applies to $\MT \Spin(4)$. Since the map $B \Spin(4) \to B \SO(4)$ induces an isomorphism in rational cohomology, the induced map 
\[
\Omega^\infty_0 \MT \Spin(4) \to \Omega^\infty_0 \MT \SO(4)
\]
induces an isomorphism in rational cohomology as well, and also on rational homotopy groups, by Serre class theory. 
\end{enumerate}

\begin{proof}[Proof of Theorem \ref{thm:hogherhomotopy}]
Item (4) of the above leaves us to prove that for a $1$-connected $4$-manifold $M$ and 
\begin{equation}\label{eqn:lastproof1}
x \in \ker (\pi_j (\Diff^+ (M)) \to \pi_j (\Homeo^+(M)))
\end{equation}
($j \geq 0$), we have 
\begin{equation}\label{eqn:lastproof2}
x \in \ker (\pi_{j} (\Diff^+ (M)) = \pi_{j+1} (B \Diff^+ (M)) \stackrel{\alpha_*}{\to} \pi_{j+1} (\Omega^\infty \MT \SO(4),\alpha(M)) \otimes \bQ). 
\end{equation}
The image of $x$ under \eqref{eqn:lastproof2} is the Madsen--Tillmann map $\alpha_{E_x}: S^{j+1} \to \Omega^\infty \MT \SO(4)$ of the smooth $M$-bundle $\pi: E_x \to S^{j+1}$ classified by $x$. 

The target group of \eqref{eqn:lastproof2} is not taken at the basepoint; when composed with the isomorphism $\pi_{j+1} (\Omega^\infty \MT \SO(4),\alpha(M)) \otimes \bQ \cong \pi_{j+1} (\Omega^\infty_0 \MT \SO(4)) \otimes \bQ$ induced by the loop space structure, the map \eqref{eqn:lastproof2} takes $x$ to the difference of the Madsen--Tillmann map of $E_x$ and the Madsen--Tillmann map of the trivial bundle $p: S^{j+1} \times M \to S^{j+1}$ (compare the arguments leading to \eqref{formula-foreqn:fromMCGtopi1mt} above). Items (1)--(3) show that we must prove 
\[
\kappa_c (E_x) - \kappa_c (S^{j+1} \times M \to S^{j+1}) \in H^* (S^{j+1};\bQ)
\]
for each $c \in H^* (B\SO(4);\bQ)$ of degree $>4$. Since $\kappa_c$ lives in positive degrees, the second summand vanishes, so we are left with $E_x$. By assumption \eqref{eqn:lastproof1}, $E_x \to S^{j+1}$ is trivial as a bundle of topological manifolds. Since Miller--Morita--Mumford classes can be defined for bundles of topological manifolds by \cite{ERW14} (this is straightforward from Novikov's theorem on the topological invariance of rational Pontrjagin classes), we obtain that $\kappa_c(E_x)=0$, for all $c \in H^{*>4}(B \SO(4);\bQ)$. 
\end{proof}

\bibliographystyle{plain}
\bibliography{literature}

\end{document}